\renewcommand{\geq}{\geqslant}
\newcommand{\R}{\mathds R}
\newcommand{\I}{\mathds 1}
\def\aa{\alpha}
\def\dd{\delta}
\def\d{{\rm d}}
\def\<{\langle}
\def\>{\rangle}
 \def\ss{\sqrt}
\def\bb{\beta}
\def\R{\mathbb R}   \def\ss{\sqrt} 
 \def\kk{\kappa} 
\def\dd{\delta}  \def\vv{\varepsilon} 
\def\<{\langle} \def\>{\rangle}  
\def\d{\text{\rm{d}}} \def\bb{\beta} \def\aa{\alpha} 
  \def\si{\sigma} 
 \def\beq{\begin{equation}}  
\def\e{\text{\rm{e}}}  \def\OO{\Omega}  
 \def\P{\mathbb P}
\def\E{\mathbb E}
\def\to{\rightarrow}
\def\8{\infty}\def\3{\triangle}
\def\1{\lesssim}
\def\iff{\iffalse}
\renewcommand{\bar}{\overline}
\renewcommand{\hat}{\widehat}
\renewcommand{\tilde}{\widetilde}
\newtheorem{theorem}{Theorem}[section]
\newtheorem{proposition}[theorem]{Proposition}
\theoremstyle{definition}
\newtheorem{remark}[theorem]{Remark}
\numberwithin{equation}{section}
\begin{document}
\allowdisplaybreaks

\title[SDEs with irregular drifts] {limit theorems for SDEs with irregular drifts}

\author{
Jianhai Bao\qquad
Jiaqing Hao}
\date{}
\thanks{\emph{J.\ Bao:} Center for Applied Mathematics, Tianjin University, 300072  Tianjin, P.R. China. \url{jianhaibao@tju.edu.cn}}

\thanks{\emph{J.\ Hao:}
Center for Applied Mathematics, Tianjin University, 300072  Tianjin, P.R. China. \url{hjq_0227@tju.edu.cn}}

\maketitle
\begin{abstract}
In this paper, concerning SDEs with H\"older continuous drifts, which are merely dissipative at infinity, and SDEs
with  piecewise continuous drifts, we investigate
 the strong law of large numbers and the central limit theorem for underlying additive functionals  and reveal the corresponding rates of convergence. To establish the  limit theorems under consideration, the exponentially contractive property of solution processes under the (quasi-)Wasserstein distance
plays  an indispensable role.  In order to  achieve such contractive property, which is new and interesting in its own right   for SDEs with H\"older continuous drifts or piecewise continuous drifts,
the reflection coupling method is employed and meanwhile a sophisticated test function  is built.

\medskip

\noindent\textbf{Keywords:} Central limit theorem; dissipativity at infinity; H\"older continuous drift; piecewise continuous drift; strong law of large numbers.

\smallskip

\noindent \textbf{MSC 2020:} 60H10, 60F05, 60F15.
\end{abstract}
%\noindent \textbf{MSC 2020:} 60H10, 60F05, 60F15.

\section{Introduction and main results}
The research on limit theorems for Markov processes has a long and rich history. As two typical candidates of  limit theorems,
 the law of large numbers (LLN for short) and the central limit theorem (CLT for abbreviation), depicting respectively the temporal average  convergence
  to the ergodic limit and the normalized fluctuations around the ergodic limit, have  developed greatly in various settings in a century; see \cite{HH,JS,KLO,Kulik}, to name just a few.

 For the sake  of  the establishment on  limit theorems concerned with Markov processes, one of the essential ingredients is to investigate  the  corresponding  ergodic property. When the Markov process under consideration  possesses the strong mixing properties  (e.g., ergodic in the total variation distance),
 the  LLN and the CLT associated with the additive functionals can be derived  with   the respective  convergence rates    $t^{-\frac{1}{2}+\vv}$ for any $\vv\in(0,\frac{1}{2})$ and $t^{-\frac{1}{2}}$; see, for instance, \cite[p.217-218]{Sh} and \cite[Theorem 5.1.2]{Kulik} for more details.  Whereas, in some occasions, the Markov process  under investigation  does not enjoy the strong mixing properties; see, for example, \cite[Example 5.1.3]{Kulik} concerned with functional SDEs, which  have  the so-called reconstruction property. Concerning  this setting (e.g., ergodicity  under  the (quasi)-Wasserstein distance   in lieu of the total variation distance), the study on limit theorems has also advanced
in the past few years. In particular,  the weak LLN and the CLT were explored in \cite{KW}  for weakly ergodic Markov processes by examining respectively the Feller property, exponential ergodicity under the $1$-Wasserstein distance and   uniform moment estimates with %of
high order.   Additionally, \cite[Theorem 5.3.3]{Kulik} and \cite[Theorem 5.3.4]{Kulik} addressed respectively the issues on the LLN and the CLT for stationary Markov processes with  weakly ergodic properties. Subsequently,  under the continuous-time path coupling condition (see \cite[(5.3.10)]{Kulik} therein),  \cite[Proposition 5.3.5]{Kulik} extended the framework  in \cite[Theorems 5.3.3 and 5.3.4]{Kulik}
 to the non-stationary setup.
  In   comparison with the counterparts in
  \cite{KW,Kulik},  Shirikyan \cite{Sh} provided much more elegant conditions for the validity of the LLN and the CLT.  Particularly,
 Shirikyan \cite{Sh} formulated respectively a general criterion to establish the strong LLN and the CLT for weakly mixing Markov processes and, most importantly,   the associated convergence rates were provided therein. More precisely,
 \cite[Theorem 2.3]{Sh} shows that  the convergence rate of the strong LLN is $t^{-\frac{1}{2}+r_v}$, where $r_v:=q\vee(({1+v})/({4p}))$ for $v\in(0,2p-1)$ and $q<1/2$. So, from a quantitative point of view, the appearance of the quantity $q$ will attenuate the convergence rate in a certain sense. On the other hand, in
  \cite[Theorem 2.8]{Sh}, one of the sufficient conditions for   the CLT is concerned with the requirement on the uniform  moment estimates of exponential type (see \cite[(2.25)]{Sh} for related details) associated with weakly mixing Markov processes.  In most of the circumstances, such kind of exponential estimates (uniform in time) is a  formidable  task to be implemented; see, for example, \cite[Lemma 2.1]{BWYa} for the functional SDEs as one of representatives  with the weakly ergodic property.

  As we mentioned above,  \cite{KW,Kulik,Sh} have established the LLN and the CLT for weakly ergodic Markov processes under different scenarios. In addition, the framework formulated in \cite{KW,Kulik,Sh} has been applied to (functional) SDEs/SPDEs with {\it regular coefficients}. Especially,
  as a byproduct of \cite{BWYb},    \cite{BWY} investigated the strong LLN and the CLT for a range of functional SDEs. Later, \cite{BWY} was extended in \cite{WWZ} to treat the setting regarding  functional SDEs with infinite memory.
  
  In recent years, the theory on  strong/weak well-posedness and   distribution properties (e.g., gradient estimates and Harnack inequalities) of SDEs with irregular drifts has been studied systematically (see e.g. \cite{Wang16,XXZZ}). Yet, the study on limit theorems for SDEs with irregular drifts is still vacant so far.
  Inspired by the aforementioned literature \cite{KW,Kulik,Sh} as well as \cite{BWY, WWZ}, in the present work we make an attempt  to investigate the LLN and the CLT for several class of  SDEs with irregular drifts (e.g., the H\"older continuous drifts and the piecewise continuous drifts). Most importantly, besides the establishment of the LLN and the CLT, another main goal in this work is to improve the convergence rate of the LLN in  \cite[Theorem 2.3]{Sh}  and weaken the technical condition concerned with uniformly exponential estimates imposed in \cite[Theorem 2.8]{Sh}. The above can be viewed as some   motivations of our present work.

Another motivation arises from the significant advancements of numerical limit theorems for SDEs/SPDEs with regular coefficients.
  Recently, as for SDEs/SPDEs with (semi-)Lipschitz continuous  coefficients, there are plenty of literature  on the LLN and the CLT; see e.g.
\cite{LX,PP} for SDEs approximated via the forward Euler-Maruyama scheme, \cite{Jin} with regard to SDEs discretized by the backward Euler-Maruyama method, and \cite{CDHZ} concerning semilinear SPDEs approximated via the spectral Galerkin method in the spatial direction and the exponential integrator in the temporal direction.  To the best of our knowledge, the study on the LLN and the CLT for numerical schemes corresponding to SDEs with irregular drifts is still infrequent. So, in this work, we aim to lay the theoretical  foundation on
 the  LLN and the CLT for SDEs with H\"older continuous or piecewise continuous drifts (which are representative SDEs with irregular drifts) so that  we can pave undoubtedly the way to investigating the LLN and the CLT for the numerical SDEs with irregular drifts.

Inspired by the existing  literature mentioned above, in this work we intend to address the LLN and the CLT for SDEs with H\"older continuous drifts, where one part of drifts is dissipative in the long distance, and  satisfies the monotone and Lyapunov conditions, respectively. Additionally,       the LLN and CLT for SDEs with piecewise continuous drifts will also be explored in detail. The preceding contents   will be elaborated progressively in the following three subsections.

\subsection{LLN for SDEs with H\"older continuous drifts: partial dissipativity }\label{subsection1}
In this subsection, we work on the following SDE on $\R^d:$
\begin{equation}\label{E1}
\d X_t=\big(b_0(X_t)+b_1(X_t)\big)\,\d t+\si(X_t)\,\d W_t,
\end{equation}
where $b_0,b_1:\R^d\to\R^d$, $\si:\R^d\to\R^d\otimes\R^d$, $(W_t)_{t\ge0}$ is a $d$-dimensional Brownian motion on the complete  filtered probability space $(\OO,\mathscr F,(\mathscr F_t)_{t\ge0},\P)$.

%\smallskip

Concerning the drifts  $ b_0$ and $b_1$, we shall assume that
\begin{enumerate}
\item[(${\bf H}_b$)] $b_1:\R^d\to\R^d$ is locally Lipschitz and there exist constants $\lambda_1,\lambda_2,\ell_0>0$ such that
\begin{equation}\label{E3}
2\<x-y,b_1(x)-b_1(y)\>\le \lambda_1|x-y|^2\I_{\{|x-y|\le \ell_0\}}-\lambda_2|x-y|^2\I_{\{|x-y|\ge \ell_0\}},\qquad x,y\in\R^d;
\end{equation}
$b_0\in   C^\alpha(\R^d)$ for some  $\alpha\in(0,1)$, i.e., there exists a constant $K_1>0$ such that
\begin{equation}\label{E0}
|b_0(x)-b_0(y)|\le K_1|x-y|^\alpha,\qquad x,y\in\R^d.
\end{equation}

\end{enumerate}

With regard to the diffusion term $\si$, we shall suppose that
\begin{enumerate}
\item[(${\bf H}_\si$)] $\si:\R^d\to\R^d\times\R^d$ is Lipschitz continuous, that is, there is a constant $K_2>0$ such that
\begin{equation}\label{E*}
\|\sigma(x)-\sigma(y)\|_{\rm HS}^2\le K_2|x-y|^2, \qquad x,y\in\R^d,
\end{equation}
 and moreover
there exists a constant $\kk\ge1$ such that
\begin{equation}\label{E4}
\frac{1}{\kk}|y|^2\le \<(\si\si^*)(x)y,y\>\le \kk|y|^2,  \qquad x,y\in\R^d.
\end{equation}
\end{enumerate}

Before   proceeding, we make some comments on Assumptions (${\bf H}_b$) and (${\bf H}_\si$), respectively.

\begin{remark}
 In literature, the Assumption \eqref{E3}
%(${\bf H}_b$)
 is also named as  the dissipativity at infinity; see, for example, \cite{MMS}. %%
  To demonstrate the condition \eqref{E3}, we provide an example below. %%
Define  for some parameters $a>0$ and $n\ge1$,
\begin{equation*}
U(x)=x^2(g_n(x))^2+a^2-2axg_n(x),\qquad x\in\R,
\end{equation*}
where $g_n(x):=(x\wedge n)\vee (-n),\, x\in \R.$
Then, $b_1(x)=-U'(x)$ satisfies  the condition   \eqref{E3} (see, for example, \cite{MMS}) rather than the global convexity assumption: for some constant $K>0,$
\begin{equation*}
2\<x-y,b_1(x)-b_1(y)\>\le-K|x-y|^2,\qquad x,y\in\R.
\end{equation*}
 Since the drift term $b_0$ is singular (i.e. H\"older continuous), the uniformly elliptic condition in
\eqref{E4} is vitally important in addressing the well-posedness of \eqref{E1}. Most importantly, the condition \eqref{E4} also plays a crucial role in
exploring the exponentially contractive property of the SDE \eqref{E1} via the reflection coupling method; see the proof of Proposition \ref{pro1} for related details.
\end{remark}

Under Assumptions $({\bf H}_b)$ and $({\bf H}_\si)$, the SDE \eqref{E1} admits a unique strong solution $(X_t)_{t\ge0}$. Indeed, to address the strong well-posedness, we can adopt the routine as follows: first of all,
 we shall show that the SDE \eqref{E1} has a unique local solution via the Zvonkin transformation and subsequently claim that the local solution is indeed a global one. In some occasions, we shall write $(X_t^x)_{t\ge0}$ instead of $(X_t)_{t\ge0}$ to highlight the dependence on the initial value $X_0=x\in\R^d. $ In the following part, we shall denote  $C_{\rm Lip}(\R^d)$ by the collection of all Lipschitz continuous functions $f:\R^d\to\R$.
 Moreover, for a function $f:\R^d\to\R $  and $\nu\in\mathscr P(\R^d)$ (i.e., the set of probability measures on $\R^d$),
 we shall adopt  the shorthand notation $\nu(f)=\int_{\R^d}f(x)\,\nu(\d x)$ in case of the integral $\nu(|f|)<\8.$

The following LLN   reveals the convergence rate of the additive functional $A_t^{f,x}:=\frac{1}{t}\int_0^tf(X_s^x)\,\d s$ associated with a range of SDEs, which might be dissipative merely in the long distance and, in particular,  allows one part of the drift terms involved to be  H\"older continuous.

\begin{theorem}\label{thm1}
Assume $($${\bf H}_b$$)$ and $($${\bf H}_\si$$)$. Then, for any   $f\in C_{\rm Lip}(\R^d)$ and $\vv\in(0,1/2)$, there exist a random time $T_\vv\ge 1$ and a constant $C>0$ $($dependent on the Lipschitz constant $\|f\|_{\rm Lip}$ and the initial value  $x$$)$ such that for all $t\ge T_\vv,$
\begin{equation}\label{EE1}
\big|A_t^{f,x}-\mu(f)\big|\le Ct^{-\frac{1}{2}+\vv}, 
\end{equation}
where $\mu\in\mathscr P(\R^d)$ stands for the unique invariant probability measure of $(X_t^x)_{t\ge0}$ solving \eqref{E1}.
\end{theorem}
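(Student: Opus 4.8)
The plan is to combine the exponential contraction provided by Proposition~\ref{pro1} with a martingale approximation (Poisson equation) for the additive functional, and then to upgrade the resulting $L^p$-bound on $\int_0^t\big(f(X^x_s)-\mu(f)\big)\,\d s$ to an almost-sure rate by a Borel--Cantelli argument along a discrete mesh. Write $(P_t)_{t\ge0}$ for the Markov semigroup of \eqref{E1}. As a harmless reduction I would replace $f$ by $f-\mu(f)$, so that $\mu(f)=0$ and the claim becomes $\big|\int_0^t f(X^x_s)\,\d s\big|\le Ct^{1/2+\vv}$ for all $t$ beyond some random time. From Proposition~\ref{pro1} I would first record that $\mu$ is the unique invariant probability measure, that it has finite moments of every order, and that there are constants $C,\ll>0$ such that, for $f\in C_{\rm Lip}(\R^d)$ and $t\ge0$,
\[
|P_tf(x)-\mu(f)|\le C\|f\|_{\rm Lip}\,\e^{-\ll t}(1+|x|),\qquad |P_tf(x)-P_tf(y)|\le C\|f\|_{\rm Lip}\,\e^{-\ll t}|x-y|;
\]
here the second estimate relies on the (quasi-)Wasserstein distance of Proposition~\ref{pro1} being comparable to $\mathbb W_1$ --- which is forced by the linear-rate dissipativity in \eqref{E3} --- together with the short-time smoothing coming from the uniform ellipticity \eqref{E4}.

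Next I would exploit the dissipativity at infinity at the level of moments: taking $y=0$ in \eqref{E3}, using the sublinear growth of $b_0$ from \eqref{E0} and the boundedness of $\si$ forced by \eqref{E4}, one gets $\<x,b_0(x)+b_1(x)\>+\tfrac12\|\si(x)\|_{\rm HS}^2\le -c_1|x|^2+c_2$ for suitable $c_1,c_2>0$, whence $\sup_{t\ge0}\E|X^x_t|^p<\8$ for every $p\ge1$ by the usual It\^o/Gronwall argument. I would then set
\[
u(x):=\int_0^\8(P_sf)(x)\,\d s,
\]
which is absolutely convergent with $|u(x)|\le C\|f\|_{\rm Lip}(1+|x|)$ by the first bound above, and globally Lipschitz with $\|\nn u\|_\8\le C\|f\|_{\rm Lip}$ by the second. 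Since $\si\si^*,b_0,b_1$ are (locally) H\"older, $f$ is Lipschitz and \eqref{E4} is uniformly elliptic, interior Calder\'on--Zygmund estimates give $u\in W^{2,p}_{\rm loc}(\R^d)$ for every finite $p$, and integrating Kolmogorov's equation $\pp_sP_sf=\L P_sf$ over $s\in(0,\8)$ yields $\L u=-f$ almost everywhere, $\L$ being the generator of \eqref{E1}. Applying the generalized It\^o (It\^o--Krylov) formula to $u(X^x_\cdot)$ --- or, alternatively, mollifying $b_0$, using the classical It\^o formula for the smooth approximating solutions and passing to the limit via the uniform contraction --- produces the decomposition $\int_0^t f(X^x_s)\,\d s=u(x)-u(X^x_t)+M_t$, where $M_t:=\int_0^t\<\nn u(X^x_s),\si(X^x_s)\,\d W_s\>$ satisfies $\<M\>_t\le\kk\|\nn u\|_\8^2\,t$. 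Combining Burkholder--Davis--Gundy with the uniform moment bound then gives, for every $p\ge2$ and all $t\ge1$,
\[
\E\Big|\int_0^t f(X^x_s)\,\d s\Big|^p\le C_p\Big(\E|u(x)-u(X^x_t)|^p+\E\<M\>_t^{p/2}\Big)\le C_p\,t^{p/2},
\]
with $C_p$ depending on $\|f\|_{\rm Lip}$ and $x$.

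To pass to the almost-sure rate, I would fix $\vv\in(0,1/2)$, choose $p>1/\vv$, and invoke Chebyshev's inequality: $\sum_{n\ge1}\P\big(\big|\int_0^n f(X^x_s)\,\d s\big|>n^{1/2+\vv}\big)\le\sum_{n\ge1}C_p\,n^{-p\vv}<\8$, so Borel--Cantelli produces an almost surely finite random $N$ with $\big|\int_0^n f(X^x_s)\,\d s\big|\le n^{1/2+\vv}$ for all $n\ge N$. For $t\in[n,n+1)$ the leftover piece obeys $\big|\int_n^t f(X^x_s)\,\d s\big|\le|f(0)|+\|f\|_{\rm Lip}\int_n^{n+1}|X^x_s|\,\d s$, and since $\sup_n\E\big(\int_n^{n+1}|X^x_s|\,\d s\big)^p<\8$, a second Chebyshev--Borel--Cantelli step makes this leftover eventually $\le n^{1/2}$ almost surely. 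Letting $T_\vv$ be the larger of the two random thresholds (and at least $1$) then yields $\big|\int_0^t f(X^x_s)\,\d s\big|\le Ct^{1/2+\vv}$ for all $t\ge T_\vv$, which is exactly \eqref{EE1} after dividing by $t$ and restoring $\mu(f)$.

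The step I expect to be the main obstacle is the martingale approximation under the irregular drift: establishing that $u$ is regular enough for the (generalized) It\^o formula, that $\L u=-f$, and that the semigroup satisfies the Lipschitz-in-space bound $|P_tf(x)-P_tf(y)|\le C\e^{-\ll t}|x-y|$ for unbounded Lipschitz $f$. This is precisely where the uniform ellipticity \eqref{E4} and the reflection coupling behind Proposition~\ref{pro1} are indispensable, the former furnishing the $W^{2,p}_{\rm loc}$ (hence $C^{1}$) regularity of $u$, and both together the spatial-regularisation estimate for $(P_t)_{t\ge0}$. Once this decomposition is in hand, the improvement over \cite[Theorem~2.3]{Sh} --- the disappearance of the extra exponent $q$ --- comes essentially for free, since it amounts to replacing Shirikyan's recursive mixing estimate by the exact order-$t$ growth of $\<M\>_t$, the surviving $\vv>0$ being only the price of working at a finite moment order $p$.
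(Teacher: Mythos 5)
Your proposal is correct in outline and reaches the same two-step structure as the paper (an $L^q$ bound $\E|\int_0^t f(X_s^x)\,\d s|^q\lesssim t^{q/2}$ followed by Chebyshev and Borel--Cantelli along integer times, plus a separate Borel--Cantelli treatment of the leftover piece on $[\lfloor t\rfloor,t]$), but you obtain the central moment estimate by a genuinely different route. The paper does not use a corrector at this stage: it invokes the covariance-type inequality of \cite[(2.22)]{Sh},
\begin{equation*}
\E\Big|\int_0^tf(X_s)\,\d s\Big|^q\lesssim\bigg(\int_0^t\int_0^s\big(\E\big( |f(X_u)|\,|(P_{s-u}f)(X_u)|\big)^{\frac q2}\big)^{\frac 2q}\,\d u\,\d s\bigg)^{\frac q2},
\end{equation*}
and feeds into it the decay $|(P_tf)(x)-\mu(f)|\le C^*\e^{-\lambda^*t}(|x|+\mu(|\cdot|))$ coming from Proposition \ref{pro1} via Kantorovich duality, together with the uniform moment bound; this gives \eqref{EE4} for every even $q$ with no PDE input whatsoever. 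Your route instead builds the corrector $u=\int_0^\infty P_sf\,\d s$, solves the Poisson equation, and controls the martingale part by BDG. Both give the same rate; what Shirikyan's inequality buys is the complete avoidance of the regularity questions you correctly single out as the main obstacle ($u\in W^{2,p}_{\rm loc}$, $\L u=-f$ a.e., It\^o--Krylov with a merely H\"older $b_0$ and a superlinearly growing $b_1$), while your route buys a cleaner probabilistic interpretation (exact order-$t$ growth of $\langle M\rangle_t$) and makes the disappearance of Shirikyan's extra exponent $q$ transparent, as you note.

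Two small remarks. First, the elliptic-regularity/It\^o--Krylov step is avoidable even inside your own scheme: since $P_tu=u-\int_0^tP_sf\,\d s$ by Fubini and the semigroup property, $M_t:=u(X_t^x)-u(x)+\int_0^tf(X_s^x)\,\d s$ is a martingale with increments $Z_i=\int_{i-1}^if(X_s^x)\,\d s+u(X_i^x)-u(X_{i-1}^x)$ having uniformly bounded $p$-th moments, and the discrete BDG inequality already yields $\E|M_n|^p\lesssim n^{p/2}$; this is essentially how the paper handles the martingale part in Section \ref{sec3}, and it would let you drop the $W^{2,p}_{\rm loc}$ machinery entirely. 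Second, your bound $|P_tf(x)-P_tf(y)|\le C\|f\|_{\rm Lip}\e^{-\lambda t}|x-y|$ needs no short-time smoothing and no comparison of quasi-Wasserstein with $\mathbb W_1$: in the setting of Theorem \ref{thm1}, Proposition \ref{pro1} is stated directly for $\mathbb W_1$, so the estimate is immediate from duality, exactly as in \eqref{ER} of the paper.
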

Before the end of this subsection, we make some remarks and
 comparisons  with the existing literature.
\begin{remark}
With regard to the random time $T_\vv$ mentioned in Theorem \ref{thm1}, it indeed has any finite $m$-th moment; see, for example, \cite[Corollary 2.4] {Sh} for related details.
 In \cite{Sh}, a general framework was provided to establish the LLN for mixing-type Markov processes.
When the weight function therein is constant (which corresponds the setup we work on in the present paper), the observable involved must be bounded. Hence, the general criterion in  \cite{Sh} cannot be applied (at least) directly to handle the setting we are interested in, where the observable herein is unbounded.
%On the other hand, once the weight function is a constant, the \cite[Condition 2.2]{Sh} is insufficient to treat the remainder $\frac{1}{t}\int_{\lfloor t\rfloor}^t|f(X_s)|\,\d s$. 
We have to refine the proof of \cite[Theorem 2.3]{Sh}. In addition, in \cite[Theorem 2.3]{Sh}, the corresponding convergent rate is $t^{-\frac{1}{2}+r_v}$ for $r_v:=q\vee(({1+v})/({4p}))$ with any $q<{1}/{2}$ and $v\in(0,2p-1)$. Whereas, in the present scenario, the associated convergence rate is  $t^{-\frac{1}{2}+r_v}$, in which $r_v:= ({1+v})/({2p})$ for $v\in(0,p/2-1)$. Consequently, in a certain sense (in particular, $q$ is close enough to ${1}/{2}$), we drop the redundant  parameter $ q<{1}/{2}$
and
improve accordingly  the convergence rate derived in \cite[Theorem 2.3]{Sh}.
\end{remark}

\subsection{CLT for SDEs with H\"older continuous drifts: monotone and Lyapunov conditions}\label{subsection2}
In this subsection, we move forward to derive the CLT associated with SDEs with H\"older continuous drifts.

In the proof of Theorem \ref{thm1}, the exponential contractivity under the $1$-Wasserstein distance is one of the important factors.  Nevertheless,  the function $\varphi_f$, defined in \eqref{E23} below, is merely  Lipschitz continuous under the underlying quasi-metric rather than globally Lipschitz continuous. Hence, the exponential contractivity under the $ 1$-Wasserstein distance is insufficient to establish the corresponding  CLT via the martingale approach. Conversely, the exponential contractivity  under the quasi-Wasserstein distance (see Proposition \ref{pro2} below) is adequate for our purpose. In this setup, we can further weaken Assumption (${\bf H}_b$).

Throughout this subsection, we are
still interested in  the SDE \eqref{E1}, where Assumption $({\bf H}_\si)$ is the same as that in Subsection \ref{subsection1} whereas Assumption  $({\bf H}_b)$ is substituted with the counterpart  $({\bf H}_b')$ below. More precisely,
\begin{enumerate}
\item[(${\bf H}_b'$)] $b_0\in C^\alpha(\R^d)$ satisfying \eqref{E0} and  there exist constants $\lambda,\lambda^*>0$ such that for all $x,y\in\R^d$,
\begin{equation}\label{E*7}
 2\<x-y,b_1(x)-b_1(y)\>\le \lambda|x-y|^2
\end{equation}
and
\begin{equation}\label{E*6}
\<x,b_1(x)\>\le -\lambda^* |x|^2+C_{\lambda^*}.
\end{equation}
\end{enumerate}

The condition \eqref{E*7} shows that the drift term $b_1$ satisfies the classical monotone condition, which, in literature,  is also called the one-sided Lipschitz condition.  Let $b_1(x)=-x+f(x), x\in\R^d,$ where the bounded function $f:\R^d\to\R^d$ is   Lipschitz with Lipschitz constant greater than $1.$ Obviously, both \eqref{E*7} and \eqref{E*6} are satisfied. Nevertheless, the condition \eqref{E3} doesn't hold any more.
\eqref{E*6}, in addition to \eqref{E0} and \eqref{E4},  indicates that the SDE \eqref{E1} fulfils the Lyapunov condition.
Under (${\bf H}_b'$) and $({\bf H}_\si)$,  the SDE \eqref{E1} has a unique strong solution $(X_t^x)_{t\ge0}$ with $X_0=x\in\R^d$
 and admits a unique invariant probability measure $\mu$ (see Proposition \ref{pro2}  below).

  Before we present the second main result concerning the CLT, we further need to introduce some additional notation.
For $p\ge2$ and $\theta\in(0,1]$,
denote $C_{p,\theta}(\R^d)$ by the family  of all continuous functions $f:\R^d\to\R$ such that
\begin{equation*}
\|f\|_{p,\theta}:=\sup_{x\neq y,x,y\in\R^d}\frac{|f(x)-f(y)|}{\psi_{p,\theta}(x,y)}<\8,
\end{equation*}
where for any $x,y\in\R^d,$
\begin{equation}\label{R1}
\psi_{p,\theta}(x,y):=(1\wedge|x-y|^\theta)(1+|x|^p+|y|^p).
\end{equation}
As for $f\in C_{p,\theta}(\R^d)$, we define the corrector $R_f$ as below
\begin{equation*}
R_f(x)=\int_0^\8\big((P_tf)(x)-\mu(f)\big)\,\d t,\quad x\in\R^d,
\end{equation*}
where $(P_tf)(x):=\E f(X_t^x)$ is the Markov semigroup associated with the solution process $(X_t^x)_{t\ge0}$. Moreover, we set
\begin{equation}\label{E23}
\varphi_f(x):=\E\Big|\int_0^1f(X_r^x)\,\d r+R_f(X_1^x)-R_f(x)\Big|^2,\qquad x\in\R^d.
\end{equation}
In terms of \cite[Lemma 4.1]{BWY},   $0\le\si^2_*:={\mu(\varphi_f)}<\8$ for $f\in C_{p,\theta}(\R^d)$.  The quantity $\si$   can be used to characterize  the asymptotic variance of the additive functional $\bar A_t^{f,x}:=\frac{1}{\ss t}\int_0^tf(X_s^x)\,\d s$ for $f\in C_{p,\theta}(\R^d)$.

Next, we  present another main  result, which is  concerned with the convergence rate of the CLT corresponding the additive functional $\bar A_t^{f,x}$. In detail, we have the following statement.

\begin{theorem}\label{CLT}
Assume $($${\bf H}_b'$$)$ and $($${\bf H}_\si$$)$.  Then, for any $f\in C_{p,\theta}(\R^d)$ with $\mu(f)=0$,
 $\si^2_*:={\mu(\varphi_f)}\ge0$ and $\vv\in(0,\frac{1}{4})$, there exists a constant $C_0=C_0(\|f\|_{p,\theta},\si_*,|x|)>0$ such that
\begin{equation}
\sup_{z\in\R^d}\big(\theta_{\si_*}(z)\big|\P(\bar A_t^{f,x}\le z)-\Phi_{\si_*}(z)\big|\big)\le C_0t^{-\frac{1}{4}+\vv},\quad t\ge 1,
\end{equation}
where $\theta_{\si_*}(z):= \I_{\{0<\si_*<\8\}} +(1\wedge |z|)\I_{\{\si_*=0\}} $ and $\Phi_{\si_*}(z)$ stands for the centered Gaussian distribution function with variation $\si^2_*.$
\end{theorem}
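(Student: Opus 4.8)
The plan is to realize the additive functional through a discrete-time martingale approximation built from a corrector (i.e.\ a solution of the Poisson equation), and then to invoke a quantitative martingale central limit theorem; the regimes $\si_*>0$ and $\si_*=0$ are treated separately.

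\emph{Step 1: corrector and martingale decomposition.} By Proposition \ref{pro2} the semigroup $(P_t)$ converges to $\mu$ exponentially fast in the quasi-Wasserstein distance attached to $\psi_{p,\theta}$; hence, for every $g\in C_{p,\theta}(\R^d)$, the corrector
\[
R_g(x)=\int_0^\infty\big(P_tg(x)-\mu(g)\big)\,\d t
\]
is well defined, and one obtains the quantitative bounds $|R_g(x)|\lesssim\|g\|_{p,\theta}(1+|x|^p)$ and $R_g\in C_{p',\theta'}(\R^d)$ with $\|R_g\|_{p',\theta'}\lesssim\|g\|_{p,\theta}$ for a suitable pair $(p',\theta')$, the constants depending only on the contraction data. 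For $k\in\N$ set
\[
Y_k:=\int_{k-1}^{k}f(X_r^x)\,\d r+R_f(X_k^x)-R_f(X_{k-1}^x).
\]
The Markov property, the elementary identity $P_1R_f-R_f=-\int_0^1P_tf\,\d t+\mu(f)$ and the hypothesis $\mu(f)=0$ give $\E[Y_k\mid\mathscr F_{k-1}]=0$, while again by the Markov property $\E[Y_k^2\mid\mathscr F_{k-1}]=\varphi_f(X_{k-1}^x)$. Telescoping,
\[
\int_0^nf(X_r^x)\,\d r=M_n-R_f(X_n^x)+R_f(x),\qquad M_n:=\sum_{k=1}^nY_k,
\]
with $(M_n)_{n\ge1}$ a martingale. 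Writing $t=n+s$ with $n=\lfloor t\rfloor\ge1$ and $s\in[0,1)$, and using that $\int_n^tf(X_r^x)\,\d r$, $R_f(X_n^x)$ and $R_f(X_t^x)$ have all polynomial moments bounded uniformly in $t$ --- which follows from the Lyapunov bound \eqref{E*6} together with $({\bf H}_\si)$ --- the theorem reduces to a Berry--Esseen estimate for $t^{-1/2}M_n$ against the centred Gaussian of variance $\si_*^2=\mu(\varphi_f)$.

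\emph{Step 2: quantitative martingale CLT in the case $0<\si_*<\infty$.} We invoke a Berry--Esseen bound for martingale arrays in the spirit of \cite{Sh}, \cite{BWY}, whose Kolmogorov-distance bound is dominated by a Lyapunov term $\sum_{k\le n}\E|Y_k|^{2+2\delta}/(\si_*^2n)^{1+\delta}\lesssim n^{-\delta}$ (by Step~1's uniform moments and the growth of $R_f$, with $\delta$ as large as desired) plus the square root of the conditional-variance fluctuation
\[
\beta_n:=\E\Big|\frac1n\sum_{k=1}^n\varphi_f(X_{k-1}^x)-\si_*^2\Big|.
\]
To control $\beta_n$ we first check $\varphi_f\in C_{p'',\theta''}(\R^d)$ with $\|\varphi_f\|_{p'',\theta''}\lesssim\big(\|f\|_{p,\theta}+\|R_f\|_{p',\theta'}\big)^2$ --- this rests on $|a|^2-|b|^2=\<a-b,a+b\>$, the flow estimate $\E|X_r^x-X_r^y|^2\le C_r|x-y|^2$ for $r\in[0,1]$ (valid for the H\"older drift via the Zvonkin transformation and the uniform ellipticity \eqref{E4}) and the uniform moment bounds --- and then we apply the discrete analogue of the decomposition of Step~1 to $\varphi_f-\mu(\varphi_f)$ in place of $f$: the resulting martingale has second moment $\lesssim n$ and the boundary terms are $O(1)$ in $L^2$, whence $\beta_n\lesssim n^{-1/2}$ and $\beta_n^{1/2}\lesssim n^{-1/4}$. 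Taking $\delta$ large and absorbing the $O(n^{-1/2})$ corrections of Step~1 at the price of an arbitrarily small $\vv$ yields $\sup_z\big|\P(\bar A_t^{f,x}\le z)-\Phi_{\si_*}(z)\big|\le C_0t^{-1/4+\vv}$, which is the assertion since $\theta_{\si_*}\equiv1$ in this regime.

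\emph{Step 3: the degenerate case $\si_*=0$, and the main obstacle.} If $\si_*=0$, then $\varphi_f\ge0$ and $\mu(\varphi_f)=0$ force $\varphi_f=0$ $\mu$-a.e., so $\E[Y_k^2\mid\mathscr F_{k-1}]=0$ and $M_n\equiv0$; hence $\int_0^nf(X_r^x)\,\d r=R_f(x)-R_f(X_n^x)$ is bounded in $L^q$ uniformly in $n$, so $\E|\bar A_t^{f,x}|\lesssim t^{-1/2}$, and Markov's inequality gives $\P(\bar A_t^{f,x}\le z)\lesssim(|z|\,t^{1/2})^{-1}$ for $z<0$ and $\P(\bar A_t^{f,x}>z)\lesssim(|z|\,t^{1/2})^{-1}$ for $z>0$; since $\Phi_0(z)=\I_{\{z\ge0\}}$, multiplying through by the weight $(1\wedge|z|)$ (and using the trivial bound for $|z|\le t^{-1/2}$) yields $\sup_z(1\wedge|z|)\big|\P(\bar A_t^{f,x}\le z)-\Phi_0(z)\big|\lesssim t^{-1/2}\le t^{-1/4+\vv}$. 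The genuine difficulty is not the martingale CLT machinery but the regularity input behind Steps~1--2: proving that the corrector $R_f$ and the variance proxy $\varphi_f$ lie in the weighted H\"older classes $C_{p',\theta'}(\R^d)$ and $C_{p'',\theta''}(\R^d)$ with norms controlled by $\|f\|_{p,\theta}$, and tracking how the weight $\psi_{p,\theta}$ propagates along the flow. This is exactly where the H\"older-only regularity of $b_0$ is felt --- the flow $x\mapsto X_r^x$ must be handled through the Zvonkin transform and the ellipticity \eqref{E4}, and the exponential decay of $P_tg-\mu(g)$ needed to integrate the corrector in time is precisely the quasi-Wasserstein contractivity of Proposition \ref{pro2}; this is why Assumption $({\bf H}_b)$ is relaxed to $({\bf H}_b')$ and why the quasi-metric $\psi_{p,\theta}$ replaces the Euclidean distance throughout.
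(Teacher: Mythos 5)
Your route is essentially the paper's: the corrector $R_f$, the martingale decomposition with increments $Y_k=\int_{k-1}^k f(X_r^x)\,\mathrm{d}r+R_f(X_k^x)-R_f(X_{k-1}^x)$ (identical to the paper's $Z_i^{f,x}$), control of the boundary/remainder terms by the Lyapunov moments and the quasi-Wasserstein contraction of Proposition \ref{pro2}, a quantitative martingale CLT driven by the fluctuation of $\frac1n\sum_{i\le n}\varphi_f(X_{i-1}^x)$, and the weighted treatment of the degenerate case. Two remarks. First, the Berry--Esseen input you invoke (``Lyapunov term plus $\sqrt{\beta_n}$'') is not the standard statement; the paper uses the Hall--Heyde-type bound of the form $\big(n^{-q}+\sigma_*^{-4q}\,\mathbb{E}\big|\frac1n\sum_{i\le n}\varphi_f(X_{i-1}^x)-\mu(\varphi_f)\big|^{2q}\big)^{1/(4q+1)}$, and it is precisely the $(4q+1)$-root with $q$ large that produces the $t^{-1/4+\varepsilon}$ rate; your estimate $\mathbb{E}\big|\frac1n\sum\varphi_f(X_{i-1}^x)-\mu(\varphi_f)\big|^{2q}\lesssim n^{-q}$ (which the paper obtains by repeating the covariance computation behind \eqref{WW} for $\varphi_f$, rather than by a second martingale decomposition --- an equivalent route) plugs into that bound in the same way, so this is only a matter of citing the correct quantitative form.

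Second, there is a genuine gap in your Step 3. From $\varphi_f\ge0$ and $\mu(\varphi_f)=0$ you conclude $\mathbb{E}[Y_k^2\mid\mathscr F_{k-1}]=\varphi_f(X_{k-1}^x)=0$, hence $M_n\equiv0$. But the process is started at a fixed point $x$ and is not stationary, so ``$\varphi_f=0$ $\mu$-a.e.'' does not by itself give $\varphi_f(X_{k-1}^x)=0$ almost surely; to make this inference you would additionally need that $\varphi_f$ is continuous and that $\mu$ has full support (plausible under the uniform ellipticity \eqref{E4}, but nowhere argued in your proposal). The paper avoids this entirely: it keeps $M_n$, writes $\mathbb{E}|M_n^{f,x}|^2=\sum_{i\le n}\mathbb{E}\varphi_f(X_{i-1}^x)$ via the martingale orthogonality, and uses the ergodic-average estimate \eqref{ee} with $\mu(\varphi_f)=0$ to get $\mathbb{E}|M_n^{f,x}|^2\lesssim n^{1/2}$, whence $\sup_z(1\wedge|z|)\big|\mathbb{P}(\bar M_n^{f,x}\le z)-\Phi_0(z)\big|\lesssim n^{-1/4}$ after the Chebyshev step you also use. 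Either supply the full-support/continuity argument or adopt the paper's more robust bound; with that repair your proof is complete.
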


Before the end of this subsection, we  make some further remarks.
\begin{remark}
As Theorem \ref{thm1}, Theorem \ref{thm2} is applicable to an SDE  with the H\"older continuous drift, where another part of the drift is monotone and satisfies the Lyapunov condition.
In \cite[Theorem 2.8]{Sh}, a general criterion  was provided to explore the CLT for uniformly mixing Markov families.
In particular, the uniform  moment estimates of exponential type (see \cite[(2.25)]{Sh}), as one of the sufficient conditions, was imposed therein.
However, such a uniform moment estimate   is, in general,   hard to check; see \cite{BWY} for the setting on functional SDEs and numerical SDEs.
In lieu of the requirement on the uniform exponent moment, the uniform moment in the polynomial type, which is much easier to verify,   is sufficient for our purpose as shown in the proof of Theorem \ref{thm2}.
\end{remark}

\subsection{LLN and CLT for SDEs with piecewise  continuous drifts}\label{subsection3}
In the previous two subsections, as far as two different setups are concerned, we establish respectively the strong LLN and the CLT. Whereas, no matter what which setting, the continuity of the drift terms is necessary.

 For   the objective in this subsection,  we consider an illustrative example:
 \begin{equation*}
 b(x):=-|x|-2,\quad |x|\ge 1;\quad  b(x):=1-x^2,\quad |x|<1
 \end{equation*}
and $\si(x):=\frac{1}{2}(1+\frac{1}{1+x^2})$. Apparently, the drift term $b$ above no longer  satisfies Assumption (${\bf H}_b$) or (${\bf H}_b'$)   due to the appearance of the discontinuous points. Yet, the drift $b$ is globally dissipative in the long distance, the associated SDE should be ergodic under an appropriate probability (quasi-)distance, which is still vacant  to the best of our knowledge.   Therefore, intuitively speaking, the corresponding strong LLN and the CLT should be valid. So, in this subsection, our goal is  to deal with  the strong LLN and the CLT for SDEs, where the drifts involved might be discontinuous. So far, the topic mentioned above is still rare.

To explain the underlying essence to handle the limit theorems for SDEs with discontinuous drifts and, most importantly,  avoid the cumbersome notation,  we shall consider the scalar SDE
\begin{equation}\label{W*1}
\d X_t=b(X_t)\,\d t+\si(X_t)\,\d W_t,
\end{equation}
where $b:\R\to\R$ is piecewise continuous, i.e., there exist finitely many points $\xi_1<\cdots<\xi_k$  such that $b$ is continuous respectively on the intervals   $I_{i}:=(\xi_{i},\xi_{i+1})$, $i\in \mathbb S_k:=\{0,1,\cdots,k\}$,  where $\xi_0=-\8$ and $\xi_{k+1}=\8$; $\si:\R\to\R$
is continuous;
$(W_t)_{t\ge0}$ is a $1$-dimensional Brownian motion. So far, the SDE \eqref{W*1} with discontinuous drifts has been applied extensively in e.g. stochastic control theory and mathematical finance.

Besides Assumption (${\bf H}_\si$) with $d=1$,
we shall assume that
\begin{enumerate}
\item[$({\bf A}_b)$] For each integer $n\ge |\xi_1|\vee|\xi_k|$, there exists an increasing function $\phi:[0,\8)\to [0,\8)$
\begin{equation}\label{W*5}
|b(x)-b(y)|\le \phi(n)|x-y|,\quad x,y\in I_i\cap B_n(0),\quad i\in\mathbb S_k,
\end{equation}
where  $B_n(0):=\{x\in\R:|x|\le n\}$,
and there exists a constant $\lambda_{\star }>0$ such that
\begin{equation*}
(x-y)(b(x)-b(y))\le \lambda_{\star }(x-y)^2,\quad x,y\in I_i,\quad i\in\mathbb S_k.
\end{equation*}
Moreover,
there are constants $\lambda^\star, C_{\lambda^\star}>0$ and $\vv^\star\in(0,1/2]$ such that
\begin{equation}\label{W*}
\vv^\star|b(x)|(1+|x|)+xb(x)\le C_\star-\lambda^\star x^2,\qquad x\in\R.
\end{equation}
\end{enumerate}

Below, with regard to Assumptions $({\bf A}_b)$ and (${\bf H}_\si$) with $d=1$, we make the following remarks.

\begin{remark}
In contrast to \eqref{E*6}, the Lyapunov condition \eqref{W*}   is  a little bit unusual. This condition is imposed naturally when we handle the ergodicity of the transformed SDE; see the proof of Proposition \ref{pro3} for more details. Most importantly, the appearance of $\vv^\star\in(0,1/2]$
allows $b$ to be highly nonlinear.

In \cite[Lemma 3]{MSY}, the strong well-posedness of \eqref{W*1} was treated under the global Lyapunov condition (see (A1) therein), the local monotonicity (see (A2)(i) therein), the locally polynomial growth condition (see (A2)(ii) therein) as well as  the globally polynomial growth condition (see (A3) therein) concerned with the diffusion term, which is also  non-degenerate at the discontinuous points of the drift term (see (A4) therein). By a close inspection of the argument of \cite[Lemma 3]{MSY}, the much weaker condition \eqref{W*5} can  indeed take the place of
 (A2)(ii).  Therefore, by following the exact lines of \cite[Lemma 3]{MSY}, the SDE \eqref{W*1} is strongly well-posed under Assumptions $({\bf A}_b)$ and (${\bf H}_\si$) with $d=1.$
  If we are only concerned with the well-posedness of \eqref{W*1}, Assumption (${\bf H}_\si$) with $d=1$ can be replaced definitely by
(A3) and (A4). Whereas,  the little bit strong condition (${\bf H}_\si$) with $d=1$, compared with  (A3) and (A4),
is imposed to achieve the exponentially contractive property under the quasi-Wasserstein distance via  the reflection coupling approach.
\end{remark}

Our third main result concerned with the strong LLN and the CLT for the SDE \eqref{W*1} with piecewise continuous drifts is stated as follows.
\begin{theorem}\label{thm2}
Assume $({\bf A}_b)$ and $({\bf H}_\si)$ with $d=1$. Then,
\begin{enumerate}
\item[$(1)$] $({\bf Strong~ LLN})$
For any $f\in C_{p,\theta}(\R)$ and $\vv\in(0,1/2)$, there exist a random time $T_\vv\ge 1$ and a constant $C=C(\|f\|_{p,\theta},|x|)>0$  such that for all $t\ge T_\vv,$
\begin{equation*}
\big|A_t^{f,x}-\mu(f)\big|\le Ct^{-\frac{1}{2}+\vv}, 
\end{equation*}
where $\mu \in\mathscr P(\R^d)$ is the unique invariant probability measure of $(X_t)_{t\ge0}$ solving \eqref{W*1}; see Proposition \ref{pro3} below.
 \item[$(2)$]$({\bf CLT})$ For any $f\in C_{p,\theta}(\R)$ with $\mu(f)=0$, and $\vv\in(0,1/4)$,
If
 $\si^2_*:={\mu(\varphi_f)}\ge0$,   there exists a constant $C_0=C_0(\|f\|_{p,\theta},\si_*,|x|)>0$ such that
\begin{equation*}
\sup_{z\in\R^d}\big(\theta_{\si_*}(z)\big|\P(\bar A_t^{f,x}\le z)-\Phi_{\si_*}(z)\big|\big)\le C_0t^{-\frac{1}{4}+\vv},\quad t\ge 1,
\end{equation*}
where $\theta_{\si_*}(z):= \I_{\{0<\si_*<\8\}} +(1\wedge |z|)\I_{\{\si_*=0\}} $.
\end{enumerate}
\end{theorem}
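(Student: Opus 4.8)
The plan is to reduce both assertions of Theorem~\ref{thm2} to the quantitative frameworks already developed for Theorems~\ref{thm1} and~\ref{CLT}, the only genuinely new ingredient being the exponential contractivity of \eqref{W*1} under the quasi-Wasserstein distance provided by Proposition~\ref{pro3}. The first step is to extract from the Lyapunov condition \eqref{W*}, together with the non-degeneracy \eqref{E4} with $d=1$, the uniform-in-time polynomial moment bounds $\sup_{t\ge 0}\E|X_t^x|^m\le C_m(1+|x|^m)$ for every $m\ge 1$: here $xb(x)\le C_\star-\lambda^\star x^2$ drives the dissipation in an It\^o--Gr\"onwall estimate, while the extra term $\vv^\star|b(x)|(1+|x|)$ in \eqref{W*} is precisely what later lets us dominate the integrand $b(X_s^x)$ of the additive functional by a polynomial in $|X_s^x|$. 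That these estimates hold in the polynomial rather than the exponential scale is enough for everything below, which is the point of the remark comparing our hypotheses with \cite[(2.25)]{Sh}.

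For part~(1) (the strong LLN) I would follow the refinement of \cite[Theorem~2.3]{Sh} carried out in the proof of Theorem~\ref{thm1}. For $f\in C_{p,\theta}(\R)$, after subtracting $\mu(f)$ and decomposing $\int_0^t(f(X_s^x)-\mu(f))\,\d s$ over unit-length blocks, one introduces the associated martingale increments and the geometric decay of $\|P_sf-\mu(f)\|_{p,\theta}$ furnished by the contraction under $\psi_{p,\theta}$ in Proposition~\ref{pro3}; the martingale part is controlled by Burkholder--Davis--Gundy combined with the uniform polynomial moments, and the remaining terms by that decay. Optimizing the split between the two contributions gives, for each $\vv\in(0,1/2)$, the bound $|A_t^{f,x}-\mu(f)|\le C t^{-1/2+\vv}$ for $t\ge T_\vv$, with a random time $T_\vv\ge 1$ possessing finite moments of every order, exactly along the lines of \cite[Corollary~2.4]{Sh}.

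For part~(2) (the CLT) the scheme is the martingale approximation used for Theorem~\ref{CLT}. One first checks that for $f\in C_{p,\theta}(\R)$ with $\mu(f)=0$ the corrector $R_f(x)=\int_0^\8\big((P_sf)(x)-\mu(f)\big)\,\d s$ is well defined and sufficiently regular (again via the contraction in Proposition~\ref{pro3} and the uniform moments), that it solves $\mathscr L R_f=-f$ in the relevant (transformed) sense, so that $M_t:=R_f(X_t^x)-R_f(x)+\int_0^t f(X_s^x)\,\d s$ is a martingale, whence $\bar A_t^{f,x}=M_t/\sqrt t+o(1)$. A quantitative martingale central limit theorem, with the convergence of $\langle M\rangle_t/t$ to $\si_*^2=\mu(\varphi_f)$ controlled by Proposition~\ref{pro3} and the fourth-moment/Lindeberg terms controlled by the uniform polynomial moments, then yields the Berry--Esseen-type rate $t^{-1/4+\vv}$; the degenerate case $\si_*=0$, where the limiting law is a point mass, is handled separately, which accounts for the weight $\theta_{\si_*}$ in the statement. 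The finiteness $0\le\si_*^2=\mu(\varphi_f)<\8$ needed here is exactly \cite[Lemma~4.1]{BWY}.

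The main obstacle will be making the corrector calculus rigorous when $b$ is only piecewise continuous: It\^o's formula cannot be applied to $R_f(X_t^x)$ directly, and even the contraction and ergodicity of \eqref{W*1} are obtained only after a transformation. I would therefore pass systematically through the Zvonkin/It\^o--Tanaka-type transformation $\Phi$ used in Proposition~\ref{pro3} and in the well-posedness argument modelled on \cite[Lemma~3]{MSY}: $Y_t=\Phi(X_t^x)$ solves an SDE whose drift is Lipschitz off the finite set $\{\xi_1,\dots,\xi_k\}$ and whose coefficients are bi-Lipschitz comparable to those of \eqref{W*1}, so that the uniform moment bounds, the quasi-Wasserstein contraction and the martingale identity all transfer back and forth between $X$ and $Y$. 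The technical heart of the reduction is to verify that $\psi_{p,\theta}$ is comparable with its pullback under $\Phi$ and that $R_f$ and $\varphi_f$ inherit the required bounds after this change of variables; once this is in place, parts~(1) and~(2) follow essentially verbatim from the proofs of Theorems~\ref{thm1} and~\ref{CLT}, respectively.
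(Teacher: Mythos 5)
Your proposal follows essentially the same route as the paper: establish the quasi-Wasserstein contraction for \eqref{W*1} through the transformation $G$ (Proposition \ref{pro3}), then rerun the proofs of Theorems \ref{thm1} and \ref{CLT} using the polynomial moment bounds supplied by the Lyapunov condition \eqref{W*}, with $|X_t|$ replaced by $|X_t|^p$. One remark: your detour back through the transformed SDE to make the corrector rigorous (verifying $\mathscr L R_f=-f$ and applying It\^o's formula to $R_f(X_t^x)$) is not needed, since the martingale $M_t^{f,x}$ is defined purely via the semigroup and its martingale property follows from the Markov property together with the decay \eqref{ET}, so the transformation enters only in the proof of Proposition \ref{pro3}.
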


Before the ending of this subsection, we make some further comments.
\begin{remark}
To finish the proof of Theorem \ref{thm2}, the $1$-dimensional  diffeomorphism transformation (see \eqref{HH} below) plays a crucial role. For the multidimensional transformation to handle well-posedness and numerical approximations for SDEs with piecewise continuous drifts, we refer to \cite[Theorem 3.14]{LSb} for more details. With the help of the multidimensional transformation initiated in \cite{LSb},   Theorem \ref{thm2} can be generalized to the multidimensional SDEs with piecewise continuous drifts. Since such a generalization will only render the notation more cumbersome without bringing any new insights into the arguments, in the present work  we restrict ourselves to the $1$-dimensional setup.

Even though the original SDE under consideration is dissipative in the long distance, the corresponding  transformed SDE is no longer dissipative (at infinity). Based on this point of view, the SDE \eqref{W*1} is ergodic under the quasi-Wasserstein distance rather than the genuine Wasserstein distance; see Proposition \ref{pro3} for more details.
\end{remark}

The remainder of this paper is organized as follows. In Section \ref{sec2}, via the reflection coupling,  we investigate the $1$-Wasserstein exponential contractivity for SDEs with H\"older continuous drifts, which also allow the drifts involved to be dissipative in the long distance. Subsequently, the proof of Theorem \ref{thm1} is complete.  Section \ref{sec3} is devoted to the proof of Theorem \ref{thm1} based on the establishment of the exponential ergodicity under the quasi-Wasserstein distance, which is interesting in its own right for SDEs with H\"older continuous drifts, where the other drift parts satisfy  the monotone and Lyapunov condition.  In Section \ref{sec4}, we aim to complete the proof of Theorem \ref{thm2} with the aid of the exponential contractivity under the quasi-Wasserstein distance, which is new for SDEs with piecewise continuous drifts.

\section{Proof of Theorem \ref{thm1}}\label{sec2}
In this section, we aim to complete the proof of Theorem \ref{thm1}, which is based on the following $1$-Wasserstein contractive property. 
\begin{proposition}\label{pro1}
Assume $($${\bf H}_b$$)$ and $($${\bf H}_\si$$)$. Then, there exist constants $C^*,\lambda^*>0$ such that for all $t\ge0$ and $\mu,\nu\in\mathscr P_1(\R^d)$ $($the set of probability measures on $\R^d$ with finite  moments of first order$)$,
\begin{equation}\label{E14}
\mathbb W_1(\mu P_t, \nu P_t)\le C^*\e^{-\lambda^* t}\mathbb W_1(\mu,\nu),
\end{equation}
where $\mu P_t$ stands for the law of $X_t$, the solution to \eqref{E1},  with the initial distribution $\mathscr L_{X_0}=\mu$, and $\mathbb W_1$ means the $1$-Wasserstein distance. Moreover, \eqref{E14} implies that $(X_t)_{t\ge0}$ has a unique invariant probability measure $\mu.$
\end{proposition}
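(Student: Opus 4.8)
The plan is to establish \eqref{E14} by the reflection coupling method, the heart of which is the construction of a concave, bi-Lipschitz ``distance function'' adapted simultaneously to the H\"older drift $b_0$ and to the dissipativity-at-infinity of $b_1$. Fix $\mu,\nu\in\mathscr P_1(\R^d)$ and couple two copies of \eqref{E1}: let $X_t$ be driven by $W_t$ with $\mathscr L_{X_0}=\mu$, and let $X_t'$ be driven by the reflected noise $(I-2e_te_t^*)\,\d W_t$, $e_t:=(X_t-X_t')/|X_t-X_t'|$, with $\mathscr L_{X_0'}=\nu$, using a smooth interpolation towards a synchronous coupling in a shrinking neighbourhood of the diagonal which is eventually removed. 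Write $r_t:=|X_t-X_t'|$ and apply It\^o's formula. The radial component of the coupled noise is bounded below by a positive constant owing to the \emph{lower} ellipticity in \eqref{E4} (this is precisely where \eqref{E4} is indispensable), so it feeds a term $\tfrac12 a(r_t)\,\d t$ with $a\gtrsim 1$ into the It\^o differential of $r_t$; the drift contributes, via \eqref{E3}, at most $\tfrac{\lambda_1}{2}r_t\,\d t$ on $\{r_t\le\ell_0\}$ and at most $-\tfrac{\lambda_2}{2}r_t\,\d t$ on $\{r_t\ge\ell_0\}$; the H\"older bound \eqref{E0} adds at most $K_1 r_t^{\alpha}\,\d t$; and the Lipschitz bound \eqref{E*} on $\si$, accounting for the mismatch $\si(X_t)\neq\si(X_t')$ along the coupling, adds a further drift of order $r_t\,\d t$. (An alternative is first to remove $b_0$ by a Zvonkin-type transformation, reducing to an SDE with one-sided-Lipschitz-at-infinity drift and Lipschitz uniformly elliptic diffusion, and then to run the same coupling; we follow the direct route.)

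Next I introduce $g\in C^2([0,\infty))$ with $g(0)=0$, $g'>0$, $g''\le 0$ and $c_0 r\le g(r)\le r$ for some $c_0\in(0,1)$, built by the Eberle-type recipe: set $g'(r):=\exp\!\big(-\tfrac12\int_0^r s\,\varrho(s)\,\d s\big)$, where $\varrho(s)$ dominates (up to the lower-ellipticity constant) the total ``expansion rate'' $K_1 s^{\alpha-1}+\tfrac{\lambda_1}{2}+C_{\si}$ arising above on $(0,\ell_0]$, and then correct $g'$ on $[\ell_0,\infty)$ so that the genuine contraction $-\tfrac{\lambda_2}{2}r$ forces $\tfrac12 a(r)g''(r)+\beta(r)g'(r)\le-\lambda^* g(r)$ for all $r>0$. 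The key point making this work is that $s\mapsto s\cdot K_1 s^{\alpha-1}=K_1 s^{\alpha}$ is \emph{integrable} at $0$ because $\alpha\in(0,1)$, so $g'$ is well defined, bounded, and bounded away from $0$ on $[0,\ell_0]$, whence $g\asymp\mathrm{id}$; the contractive second-order term $\tfrac12 a(r)g''(r)<0$ then absorbs the expansive first-order contributions on $(0,\ell_0]$, while dissipativity does the job for $r\ge\ell_0$. Consequently $\d g(r_t)\le-\lambda^* g(r_t)\,\d t+\d M_t$ for a local martingale $M$, hence $\E g(r_t)\le\e^{-\lambda^* t}\,\E g(r_0)$ after localization, and therefore $\mathbb W_1(\mu P_t,\nu P_t)\le \E r_t\le c_0^{-1}\E g(r_t)\le c_0^{-1}\e^{-\lambda^* t}\E r_0$; minimizing over couplings of $(\mu,\nu)$ gives \eqref{E14} with $C^*:=c_0^{-1}$.

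For the invariant measure, a Lyapunov/Gr\"onwall estimate based on the dissipativity at infinity \eqref{E3} yields $\sup_{t\ge0}\E|X_t^x|<\infty$, so $P_t$ maps $\mathscr P_1(\R^d)$ into itself; fixing $t_0>0$, \eqref{E14} makes $(\delta_x P_{nt_0})_{n\ge1}$ a Cauchy sequence in the complete space $(\mathscr P_1(\R^d),\mathbb W_1)$, its limit $\mu$ is $P_t$-invariant by the semigroup property together with the $\mathbb W_1$-continuity of $P_t$, and uniqueness is immediate from \eqref{E14}.

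The main obstacle is the explicit design of $g$: one must at once (i) dominate the unbounded expansion rate $K_1 r^{\alpha-1}$ produced by the H\"older drift near the diagonal — feasible only because $\alpha>0$ renders it integrable after the natural weighting by $r$ — (ii) control the extra $r\,\d t$ drift created by the multiplicative, merely Lipschitz $\si$ in the reflection coupling (which is why the full strength of \eqref{E4}, not just non-degeneracy, is required), and (iii) keep $g$ globally comparable to $r$ so that $g$-contraction upgrades to genuine $\mathbb W_1$-contraction. A secondary, more routine point is the rigorous justification of the reflection coupling with non-constant $\si$: existence of the coupled system, the diagonal localization, and the supermartingale estimate across the relevant stopping times.
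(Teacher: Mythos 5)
Your overall strategy (reflection coupling plus an Eberle-type concave function $g\asymp \mathrm{id}$ solving $\tfrac12 a\,g''+\beta\,g'\le-\lambda^*g$, with the H\"older term tamed by the integrability of $r\mapsto K_1r^{\alpha-1}$ after weighting by $r$, dissipativity at infinity handling large $r$, and a Cauchy/Krylov--Bogoliubov argument for the invariant measure) is exactly the paper's route. The genuine gap is in the coupling itself, which you dismiss as ``secondary and routine'': you reflect the driving Brownian motion by $I-2e_te_t^*$ while keeping the multiplicative coefficient $\sigma(X_t')$. Writing $\Sigma_t:=\sigma(X_t)-\sigma(X_t')(I-2e_te_t^*)$, It\^o's formula for $r_t=|Z_t|$ produces, besides the drift mismatch, the correction $(2r_t)^{-1}\big(\|\Sigma_t\|_{\rm HS}^2-|\Sigma_t^*e_t|^2\big)$, and near the diagonal this is approximately $\tfrac{2}{r_t}\big(|\sigma(X_t')e_t|^2-\langle \sigma(X_t')e_t,e_t\rangle^2\big)$, i.e.\ a \emph{repulsive drift of order $1/r_t$} whenever $\sigma(X_t')e_t$ is not parallel to $e_t$ --- which is the generic situation for matrix-valued $\sigma$. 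No concave $g$ comparable to the identity can absorb a $g'(r)/r$ term (your recipe $g'(r)=\exp(-\tfrac12\int_0^r s\varrho(s)\,\d s)$ would require $s\varrho(s)\gtrsim 1/s$ near $0$, which is non-integrable and collapses the construction), so your claim that the $\sigma$-mismatch only contributes a drift of order $r_t\,\d t$ fails under this coupling. Moreover, the claimed lower bound $a\gtrsim1$ on the radial noise is not implied by \eqref{E4}: the radial intensity is asymptotically $2|\langle\sigma(X_t')e_t,e_t\rangle|$, and uniform ellipticity of $\sigma\sigma^*$ does not prevent $\langle\sigma(x)e,e\rangle=0$ (e.g.\ $\sigma$ a rotation), since $\sigma$ need not be symmetric. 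Your coupling works only in special cases such as scalar $\sigma(x)=s(x)I$.

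This is precisely the difficulty the paper's proof is organized around: it decomposes $(\sigma\sigma^*)(x)=\tilde\sigma(x)^2+\tfrac1{2\kappa}I_{d\times d}$, runs the $\tilde\sigma$-noise \emph{synchronously} and reflects only the additive component $\tfrac1{\sqrt{2\kappa}}\,\d\hat W_t$ via $\Pi_{Z_t}$. Then the reflected part acts purely in the direction ${\bf n}(Z_t)$, so it contributes a constant radial intensity $2/\kappa$ and no It\^o correction, while the synchronous part contributes only $\|\tilde\sigma(Y_t)-\tilde\sigma(\hat Y_t)\|_{\rm HS}^2/(2r_t)$, which is controlled through the Lipschitz/H\"older estimate on $\tilde\sigma$ and enters the function $\phi$ alongside $K_1r^\alpha$. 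To repair your argument you must either adopt this decomposition (or a Lindvall--Rogers-type reflection whose direction is built from $\sigma^{-1}$, with the attendant extra estimates), and only then does the rest of your proof --- which otherwise matches the paper's --- go through.
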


\begin{proof}
The proof,  based on the reflection coupling approach, of Proposition \ref{pro1} is inspired by the counterpart of \cite[Theorem 3.1]{Wang23}, which indeed was traced back to \cite{PW}. In \cite[Theorem 3.1]{Wang23}, an abstract framework upon exponential ergodicity of McKean-Vlasov SDEs, which are dissipative in the long distance, was presented. In the present setup, we follow essentially  the line in \cite[Theorem 3.1]{Wang23} whereas
we refine the corresponding details and provide   explicit conditions  imposed on the coefficients so the content is much more readable.

Due to \eqref{E4}, for each $x\in\R^d$, the matrix $(\si\si^*)(x)-\frac{1}{2\kk}I_{d\times d}$  is a nonnegative-definite symmetric matrix  so
 there exists a symmetric $d\times d$-matrix  $\tilde{  \si}(x)$ such that $\tilde{\si}(x)^2=(\si\si^*)(x)-\frac{1}{2\kk}I_{d\times d}$, where $I_{d\times d}$
means the $d\times d$-identity matrix. Therefore, we readily   have
$
(\si\si^*)(x)=\tilde{\si}(x)^2+\frac{1}{2\kk}I_{d\times d},\, x\in\R^d.
$
Consider the SDE
\begin{equation}\label{E8}
\d Y_t=b(Y_t)\d t+\tilde{\si}(Y_t)\d \tilde{W}_t+\frac{1}{\ss {2\kk}}\d \hat{W}_t,
\end{equation}
where  $(\tilde{W}_t)_{t\ge0}$ and $(\hat{W}_t)_{t\ge0}$ are mutually independent $d$-dimensional Brownian motions  defined on the same probability space.  To demonstrate that \eqref{E8} has a unique strong solution under Assumptions
$({\bf H}_b)$ and $({\bf H}_\si)$, we introduce  the notations
$$\hat \si(x):=\Big(\tilde{\si}(x),\frac{1}{\ss {2\kk}}I_{d\times d}\Big)\in \R^d\otimes\R^{2d},\quad x\in\R^d,   \quad    \mbox{ and } \quad \bar W_t:=(\tilde{W}_t, \hat{W}_t),$$
where $(\bar W_t)_{t\ge0}$ is a $2d$-dimensional Brownian motion. Whereafter,
\eqref{E8} can be reformulated as
\begin{equation*}
\d Y_t=b(Y_t)\d t+\hat{\si}(Y_t)\d \bar{W}_t.
\end{equation*}
As a result, in terms of \textcolor{blue}{Subsection \ref{subsection1}}, it is sufficient to examine that $\hat \si$ satisfies Assumption $({\bf H}_{\hat \si})$ so that  \eqref{E8} is strongly well-posed. Below, we aim to check the associated details, one by one. In the first place,
by invoking \eqref{E*} and \eqref{E4} , we derive that
\begin{equation}\label{E11}
\begin{split}
\|\hat \si(x)-\hat \si(y)\|_{\rm HS}
%\|\tilde{  \si}(x)-\tilde{  \si}(y)\|_{\rm HS}\\
%&\le  \kk\|(\si\si^*)(x)-(\si\si^*)(y)\|_{\rm HS}\\
%&\le  \kk\big(\|\si(x)\|_{\rm op}+\|\si(y)\|_{\rm op}\big) \|  \si(x)-  \si(y)\|_{\rm HS} \\
\le  2(K_2\kk^3)^{\frac{1}{2}} | x-y|,\qquad x,y\in\R^d,
\end{split}
\end{equation}
where
 we also used the fact that $\|\ss A-\ss B\|_{\rm HS}\le \frac{1}{2\lambda}\|A-B\|_{\rm HS}$
for    $d\times d$ symmetric positive matrices $A$ and $B$ with all eigenvalues greater than $\lambda>0$ (see, for example, \cite[(3.3)]{PW} for related details). %and the last inequality is available owing to
%\begin{equation}\label{E*2}
 %\|\si(x)\|_{\rm op}^2 =\sup_{|y|\le 1}\<(\si\si^*)(x)y,y\>\le \kk,\quad \|\si(x)\|_{\rm HS}^2 =\sum_{i=1}^d\<(\si\si^*)(x)e_i,e_i\>\le  d\kk,\quad x\in\R^d
%\end{equation}
 %by taking \eqref{E4} into account.
 Therefore, \eqref{E11} enables us to conclude that the mapping $x\mapsto \hat \si(x)$ is also Lipschitz.  In the next place,  it is easy to see from \eqref{E4} that for all $x,y\in\R^d$,
\begin{equation*}
\frac{1}{\kk}|y|^2\le \<(\hat \si\hat \si^*)(x)y,y\>=\<(  \si  \si^*)(x)y,y\>\le   \kk |y|^2.
\end{equation*}

For ${\bf 0}\neq x\in\R^d$, set the normalized vector ${\bf n}(x):= x/|x|$ and define the orthogonal matrix
\begin{equation*}
\Pi_x=I_{d\times d}-2{\bf n}(x)\otimes {\bf n}(x)\in\R^d\times\R^d.
\end{equation*}
 To achieve the quantitative estimate \eqref{E14}, we work on the SDE
\begin{equation}\label{E10}
\begin{cases}
\d \hat Y_t=b(\hat Y_t)\d t+\tilde{\si}(\hat Y_t)\d \tilde{W}_t+\frac{1}{\ss {2\kk}}\Pi_{Z_t}  \d \hat{W}_t,\qquad t<\tau,\\
\d Y_t=b( Y_t)\d t+\tilde{\si}(  Y_t)\d \tilde{W}_t+\frac{1}{\ss {2\kk}}\d \hat{W}_t,\qquad\quad\quad~~ t\ge\tau,
\end{cases}
\end{equation}
where  the coupling time $
\tau:=\inf\big\{t\ge 0: Z_t={\bf 0}\big\}
$ with $Z_t:=Y_t-\hat Y_t$.
 Since  $\Pi_\cdot$ is an orthogonal matrix, \eqref{E10} is strongly well-posed before the coupling time as shown in the analysis above. Additionally,  \eqref{E10} coincides with \eqref{E8} after the coupling time. Thereby, \eqref{E10}
is strongly well-posed.

Owing to the existence of an optimal coupling, in the following context,  we can choose the initial values $Y_0$ and $\hat Y_0$ such that $\mathbb W_1(\mu,\nu)=\E|Y_0-\hat Y_0|$ for given $\mu,\nu\in\mathscr P_1(\R^d)$.
Applying It\^o's formula, we right now obtain from \eqref{E8} and \eqref{E10} that
\begin{equation}\label{E15}
\begin{split}
\d |Z_t|&\le \frac{1}{2|Z_t|}\big(2\<Z_t,b(Y_t)-b(\hat Y_t)\>+ \| \tilde{\si}(Y_t)-\tilde{\si}(\hat Y_t)\|_{\rm HS}^2\big)\,\d t\\
&\quad +\big\<{\bf n}(Z_t ), (\tilde{\si}(Y_t)-\tilde{\si}(\hat Y_t) )\d \tilde{W}_t\big\>+\frac{1}{\ss {\kk/2}}\big\<{\bf n}(Z_t ),\d\hat{W}_t\big\>,\quad t<\tau,
\end{split}
\end{equation}
where we also utilized the fact  that for any ${\bf 0}\neq x\in\R^d,$
\begin{equation*}
\<I_{d\times d}-{\bf n}(x)\otimes {\bf n}(x),{\bf n}(x)\otimes {\bf n}(x)\>_{\rm HS}=0.
\end{equation*}
By invoking \eqref{E11}, it follows from \eqref{E4} that
\begin{equation}\label{E11E}
\begin{split}
\|\tilde{  \si}(x)-\tilde{  \si}(y)\|_{\rm HS}
%&\le  \kk\big(\|\si(x)\|_{\rm op}+\|\si(y)\|_{\rm op}\big)\big( \|  \si(x)\|_{\rm HS}^{\frac{1}{4}}+\|  \si(y)\|_{\rm HS}^{\frac{1}{4}}\big) \|  \si(x)-  \si(y)\|_{\rm HS}^{\frac{3}{4}}\\
\le 4\kk^{\frac{13}{8}}d^{\frac{1}{8}}K_2^{\frac{3}{8}}|x-y|^{\frac{3}{4}},\quad x,y\in\R^d.
\end{split}
\end{equation}
With the aid of \eqref{E0} and \eqref{E*}, we find from \eqref{E11E} that
\begin{equation}\label{EE9}
\begin{split}
2&\<x-y,b(x)-b(y)\>+ \| \tilde{\si}(x)-\tilde{\si}(y)\|_{\rm HS}^2
%&\le (\lambda_1+\lambda_2)|x-y|^2\I_{\{|x-y|\le \ell_0\}}- \lambda_2 |x-y|^2 +2K_1|x-y|^{1+ \alpha}+16\kk^{\frac{13}{4}}d^{\frac{1}{4}}K_2^{\frac{3}{4}}|x-y|^{\frac{3}{2}}\\
\le \phi(|x-y|)|x-y|,\quad x,y\in\R^d,
 \end{split}
\end{equation}
where for $u\ge0$,
\begin{equation*}
\phi(u):=\big((\lambda_1+\lambda_2)u+2K_1u^\alpha+16\kk^{\frac{13}{4}}d^{\frac{1}{4}}K_2^{\frac{3}{4}}u^{\frac{1}{2}}\big)\I_{\{u\le \hbar_0\}}-\frac{1}{2}\lambda_2u
\end{equation*}
with
\begin{equation}\label{E*3}
\hbar_0:=\ell_0\vee\bigg(\frac{8K_1}{\lambda_2 }\bigg)^{\frac{1}{1-\alpha}}\vee\bigg(\frac{64\kk^{\frac{13}{4}}d^{\frac{1}{4}}K_2^{\frac{3}{4}}}{\lambda_2}\bigg)^2.
\end{equation}
Consequently,   \eqref{E15} yields
\begin{equation*}
\d |Z_t|\le \frac{1}{2 }\phi(|Z_t|)\,\d t +\<{\bf n}(Z_t ), (\tilde{\si}(Y_t)-\tilde{\si}(\hat Y_t) )\d \tilde{W}_t\>+\frac{1}{\ss {\kk/2}}\<{\bf n}(Z_t ),\d\hat{W}_t\>,\qquad t<\tau.
\end{equation*}

Define the test function
\begin{equation*}
f(r)= \kk \int_0^r\e^{- \frac{\kk}{2} \int_0^u\phi(v)\,\d v}\int_u^\8s\e^{ \frac{\kk}{2} \int_0^s\phi(v)\,\d v}\,\d s \d u,\quad r\ge0.
\end{equation*}
Straightforward calculations show that
\begin{equation*}
\begin{split}
f'(r)&= \kk  \e^{- \frac{\kk }{2}\int_0^r\phi(v)\,\d v}\int_r^\8s\e^{ \frac{\kk}{2} \int_0^s\phi(v)\,\d v}\,\d s,
  \quad r\ge0; \\
  f''(r)&=  \frac{\kk}{2} \big( - \phi(r)  f'(r)- 2 r\big), \quad r\in[0,\hbar_0),
\end{split}
\end{equation*}
and  that
\begin{equation*}
f'(r)=\frac{4}{ \lambda_2},\quad r\ge\hbar_0; \quad f''(r)=0, \quad r>\hbar_0.
\end{equation*}
Whence, there exist constants $c_*,c^{**}>0$ such that
\begin{equation}\label{E12}
c_*r\le f(r)\le c^{**}r,\qquad r\ge0
\end{equation}
and
\begin{equation}\label{E13}
\frac{1}{2}f'(r)\phi(r)+\frac{1}{ \kk}f''(r)=-r,\quad r\in[0,\hbar_0)\cup(\hbar_0,\8).
\end{equation}
Note that $f$ introduced above is a piecewise $C^2$-function. Thus,
Tanaka's formula, together with the continuity of $f'$,
shows that
\begin{equation*}
 \e^{\frac{t}{c^{**}}}f(|Z_t|) \le f(|Z_0|)+ \int_0^t\e^{\frac{s}{c^{**}}}\Big(\frac{1}{c^{**}}f(|Z_s|)+\frac{1}{2}f'(|Z_s|)\phi(|Z_s|)+\frac{1}{ \kk}f''(|Z_s|)\Big)\,\d s+  M_t,\quad t<\tau
\end{equation*}
for some martingale $(M_t)_{t\ge0}.$ This, combining \eqref{E12} with \eqref{E13},  further gives that
\begin{equation*}
\begin{split}
 \E\big(\e^{\frac{t\wedge \tau}{c^{**}}}f(|Z_{t\wedge\tau}|)\big) &\le \E f(|Z_0|)+\E \bigg( \int_0^{t\wedge\tau}\e^{\frac{s}{c^{**}}}\Big(\frac{1}{c^{**}}f(|Z_s|)+\frac{1}{2}f'(|Z_s|)\phi(|Z_s|)+\frac{1}{ \kk}f''(|Z_s|)\Big)\,\d s\bigg) \\
 %&= \E f(|Z_0|)+ \E \bigg(\int_0^{t\wedge\tau}\e^{\frac{s}{c^{**}}}\Big(\frac{1}{c^{**}}f(|Z_s|)-|Z_s|\Big)\,\d s\bigg)\\
 &\le\E f(|Z_0|).
 \end{split}
\end{equation*}
Thanks to $f(|Z_t|)\equiv0$ for all $t\ge\tau$,  it is apparent  that for all $t\ge0,$
\begin{equation*}
\e^{\frac{t}{c^{**}}}\E f(|Z_t|)=\E\big(\e^{\frac{t\wedge \tau}{c^{**}}}f(|Z_{t\wedge\tau}|)\big)\le \E f(|Z_0|).
\end{equation*}
Finally, \eqref{E14} follows immediately by recalling $\mathbb W_1(\mu,\nu)=\E|Y_0-\hat Y_0|$ and taking \eqref{E12} into consideration.

Under \eqref{E3}, \eqref{E0} and \eqref{E4}, there exists a constant $c_\star>0$ such that $\sup_{t\ge 0}\E|X_t|^2\le c_\star(1+\E|X_0|^2)$; see \eqref{EE3} below for further details. Whence, the Krylov-Bogoliubov theorem yields that $(X_t)_{t\ge0}$ has an invariant probability measure. Thus, the contractive property \eqref{E14} implies the uniqueness of invariant probability measures.
\end{proof} 

Below, we move to finish the
\begin{proof}[Proof of Theorem \ref{thm1}]
In the sequel, we shall assume that  $f\in C_{\rm Lip}(\R^d)$    and set $\vv\in(0,1/2)$.
Evidently, it suffices to verify that \eqref{EE1} is valid as long as $\mu(f)=0$. Accordingly, we shall stipulate $\mu(f)=0$ in the subsequent analysis. Below, we shall write $X_t$ instead of $X_t^x$, set
$A_t(f):=\frac{1}{t}\int_0^tf(X_s)\,\d s$ for all $t\ge0,$ and use the notation $a\lesssim b$ for given $a,b\ge0$ provided that there exists a constant $c_0>0$ such that $a\le c_0b.$
Note that for any $t\ge0,$
\begin{equation*}
 |A_t(f) |
 \le  \big|A_{\lfloor t\rfloor}(f) \big| +\frac{1}{t}\int_{\lfloor t\rfloor}^t|f(X_s)|\,\d s,
\end{equation*}
where $ \lfloor t\rfloor$ denotes the integer part of $t\ge0.$
Whence, to obtain the assertion \eqref{EE1},
 it remains to show respectively that
there exists   a random time $  T\ge 1$ (dependent on $\vv$) such that
\begin{equation}\label{EEEE2}
\big|A_{\lfloor t\rfloor}(f) \big| \lesssim t^{-\frac{1}{2}+\vv}\qquad \mbox{ and } \qquad \frac{1}{t}\int_{\lfloor t\rfloor}^t|f(X_s)|\,\d s\lesssim t^{-\frac{1}{2} }\qquad \mbox{ for all } t\ge T.
\end{equation}

By \eqref{E3}, \eqref{E0} and \eqref{E4}, it follows that there exist constants $c_1,c_2>0$ such that for any $p\ge2,$
\begin{equation*}
\d |X_t|^p\le \big(-c_1|X_t|^p+c_2\big)\,\d t+p|X_t|^{p-2}\<X_t,\si(X_t)\d W_t\>.
\end{equation*}
Then, Gronwall's inequality implies that for all $t\ge 0$ and $x\in\R^d,$
\begin{equation}\label{EE3}
\E|X_t|^p\le \frac{c_2}{c_1}+|x|^p.
\end{equation}
For any integer $q\ge2,$ direct calculations show that
\begin{equation*}
\E\Big|\int_0^tf(X_s)\,\d s\Big|^q\lesssim\bigg(\int_0^t\int_0^s\big(\E\big( |f(X_u)|(|P_{s-u}f|)(X_u)\big)^{\frac{q}{2}}\big)^{\frac{2}{q}}\,\d u\d s\bigg)^{\frac{q}{2}},
\end{equation*}
see, for instance, \cite[(2.22)]{Sh}. Next, using the invariance of $\mu$ followed by
exploiting the Kontorovich dual
and applying Proposition \ref{pro1} yields that for all $t\ge 0$ and $x\in\R^d,$
\begin{equation}\label{ER}
\big|(P_tf)(x)-\mu(f)\big|\le \|f\|_{\rm Lip}\mathbb W_1(\delta_x P_t,\mu P_t)\le C^*\e^{-\lambda^* t}(|x|+\mu(|\cdot|)),
\end{equation}
where $\|f\|_{\rm Lip}$ is the Lipschitz constant of the function $f$. The previous estimate, in addition to the Lipschitz property of $f$, $\mu(|\cdot|)<\8$ as well as \eqref{EE3}, gives that% for some constants $c_3 >0$ and $c_4>0$ (dependent on $|x|$ and $\mu(|\cdot|)$),
\begin{equation}\label{WW}
\E\Big|\int_0^tf(X_s)\,\d s\Big|^q\lesssim\bigg(\int_0^t\int_0^s\e^{-\lambda^*(s-u)}\big(1+\E|X_u|^q\big)^{\frac{2}{q}}\,\d u\d s\bigg)^{\frac{q}{2}}\lesssim t^{\frac{q}{2}}.
\end{equation}
Subsequently, by means of H\"older's inequality, we have for any $q\ge2,$
\begin{equation}\label{EE4}
\E |A_t(f) |^q \lesssim t^{-\frac{q}{2}}.
\end{equation}

For any integer $n\ge 1,$ by the Chebyshev inequality, together with \eqref{EE4} for $q=\frac{2}{\vv}>4$, we deduce that
\begin{equation*}
\P\big(|A_n(f)|>n^{-\frac{1}{2}+\vv}\big)\le n^{\frac{1}{\vv}-2} \E|A_n(f)|^{\frac{2}{\vv}}\lesssim n^{-2}.
\end{equation*}
As a consequence, the Borel-Cantelli lemma yields that  there exists a random variable $T_1 \ge 1$ such that
\begin{equation*}
\big|A_{\lfloor t\rfloor}(f)\big|\le {\lfloor t\rfloor}^{ -\frac{1}{2}+\vv},\quad \mbox{ a.s.},\qquad t\ge T_1.
\end{equation*}
This apparently ensures the first statement.

Next, we proceed to examine the second statement in \eqref{EEEE2}. In view of the BDG inequality,  we infer from \eqref{E4} and \eqref{EE3} that there exist constants $c_3,c_4>0$ such that for all integer $k\ge0$ and any $p\ge 2$,
\begin{equation*}
\begin{split}
\E\big(\sup_{k\le s\le  k+1}|X_s|^p\big)&\le c_3+\E|X_k|^p+p\,\E\Big(\sup_{k\le s\le k+1}\int_k^s|X_u|^{p-2}\<X_u,\si(X_u)\d W_u\>\Big)\\
%&\le c_3+\E|X_k|^p+\frac{1}{2}\E\bigg(\sup_{k\le s\le  k+1}|X_s|^p\bigg)+c_5\int_{k}^{k+1}\E|X_s|^p\,\d s\\
&\le c_4(1+|x|^p)+\frac{1}{2}\E\big(\sup_{k\le s\le  k+1}|X_s|^p\big)
\end{split}
\end{equation*}
so that  for all integer $k\ge0$ and any $p\ge 2$,
\begin{equation}\label{EE5}
\E\big(\sup_{k\le s\le  k+1}|X_s|^p\big)\lesssim 1+|x|^p.
\end{equation}
By retrospecting that   $f:\R^d\to\R$ is of linear growth, there exists a constant $c^\star>0$ such that $|f(x)|\le c^\star(1+|x|)$ for all $x\in\R^d.$ It is ready to see that for any integer $k\ge 16,$
\begin{equation}\label{EEE1}
\begin{split}
\P\Big(\sup_{k\le t\le k+1}|f(X_t)|>c^\star k^{\frac{1}{4}}\Big) \le \P\Big(\sup_{k\le t\le k+1}| X_t |>k^{\frac{1}{4}}-1\Big) \le  \P\Big(\sup_{k\le t\le k+1}| X_t |>\frac{1}{2}k^{\frac{1}{4}} \Big).
\end{split}
\end{equation}
Then, the Chebyshev inequality, besides \eqref{EE5}, signifies that
\begin{equation}\label{EEE2}
\begin{split}
\P\Big(\sup_{k\le t\le k+1}|f(X_t)|>c^\star k^{\frac{1}{4}}\Big) \le  \frac{32}{k^{\frac{5}{4}}}\E\big(\sup_{k\le t\le k+1}| X_t |^5\big)\lesssim\frac{1}{k^{\frac{5}{4}}} (1+|x|^5).
\end{split}
\end{equation}
Once more, applying the Borel-Cantelli lemma enables us to derive that there exists a random variable $T_2\ge 16$ such that
$|f(X_t)|\le c^\star t^{\frac{1}{4}}$, a.s., for all $t\ge T_2.$ Therefore, we obtain that for all $t\ge T_2,$
\begin{equation*}
\frac{1}{t}\int_{\lfloor t\rfloor}^t|f(X_s)|\,\d s\lesssim \frac{1}{\lfloor t\rfloor}\int_{\lfloor t\rfloor}^ts^{\frac{1}{4}}\,\d s\lesssim \lfloor t\rfloor^{\frac{1}{4}}\Big(\big(1+\frac{1}{\lfloor t\rfloor}\big)^{ \frac{5}{4}}-1\Big),\quad \mbox{ a.s.}
\end{equation*}
This, combining with the fact that $(1+r)^\alpha-1\le \alpha 2^{\alpha-1}r$ for any $\alpha>1$ and $r\in[0,1]$,
guarantees that for all $t\ge T_2,$
\begin{equation*}
\frac{1}{t}\int_{\lfloor t\rfloor}^t|f(X_s)|\,\d s \lesssim \lfloor t\rfloor^{-\frac{3}{4}} \lesssim t^{-\frac{3}{4}},\quad \mbox{ a.s.}
\end{equation*}
As a result, the second statement in \eqref{EE2} is verifiable.

Based on the analysis above, we conclude that \eqref{EE1}  follows for the random time $T:=T_1+T_2.$
\end{proof}

\section{Proof of Theorem \ref{CLT}}\label{sec3}
Before we start to complete the proof of Theorem \ref{CLT}, we provide the following proposition, which
establishes the contractive property of transition kernels under the quasi-Wasserstein distance.

\begin{proposition}\label{pro2}
Under the assumptions of Theorem \ref{CLT},  for any $p\ge2 $, $\theta\in(0,1]$  and $\mu,\nu\in\mathscr P_{\psi_{p,\theta}}(\R^d)$, there exist constant $C^*\ge1,\lambda^*>0$ such that
\begin{equation}\label{EE6}
\mathbb W_{\psi_{p,\theta}}(\mu P_t,\nu P_t)\le C^*\e^{-\lambda^* t}\mathbb W_{\psi_{p,\theta}}(\mu  ,\nu  ),\qquad t\ge0,
\end{equation}
where
\begin{align*}
\mathscr P_{\psi_{p,\theta}}(\R^d):=\big\{\mu\in\mathscr P(\R^d):\mu(\psi_{p,\theta}(\cdot,{\bf0}))<\8\big\},
\end{align*}
and $\mathbb W_{\psi_{p,\theta}}$ denotes the quasi-Wasserstein distance $($see e.g. \cite[(4.3)]{HMS}$)$ induced by the cost function
$\psi_{p,\theta}$, introduced in \eqref{R1}. Moreover, $(X_t)_{t\ge0}$ solving \eqref{E1} has a unique invariant measure $\mu\in\mathscr P_{\psi_{p,\theta}}(\R^d)$.
\end{proposition}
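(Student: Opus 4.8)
The plan is to carry over the reflection coupling constructed in the proof of Proposition \ref{pro1}, but to replace the purely distance-dependent test function used there by a product-type test function
$$F(x,y):=f(|x-y|)\big(1+|x|^p+|y|^p\big),$$
whose second factor records the joint moments of the two coupled solutions. This is forced upon us: under $({\bf H}_b')$ the part $b_1$ is only monotone (see \eqref{E*7}), with no dissipativity at infinity, so the exponential decay of $F$ cannot be produced by the distance alone — it must come from the non-degenerate noise near the diagonal and from the Lyapunov condition \eqref{E*6} away from it. The mechanism for assembling a contraction of the weighted quantity $\mathbb W_{\psi_{p,\theta}}$ out of these two ingredients is the one underlying \cite{HMS}, while the coupling and test-function technique follows \cite{Wang23,PW} and Proposition \ref{pro1}.

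\textbf{Coupling dynamics and Lyapunov bound.} First I would reuse the coupling $(Y_t,\hat Y_t)$ of \eqref{E10}, with $Z_t:=Y_t-\hat Y_t$ and coupling time $\tau$, which is strongly well-posed as in the proof of Proposition \ref{pro1}. As in \eqref{E15}, Tanaka's formula gives, for $t<\tau$, $\d|Z_t|\le\tfrac12\phi_*(|Z_t|)\,\d t+\d M_t$, where, combining \eqref{E*7}, \eqref{E0} and the $\tfrac34$-Hölder bound \eqref{E11E} on $\tt\si$, one has $\phi_*(u)\le\lambda u+2K_1u^{\alpha}+Cu^{1/2}$ on bounded ranges of $u$ (note the absence of any favourable sign), and $M_t$ is a continuous martingale whose bracket is bounded below by $\tfrac2\kk\,\d t$, because of the reflected non-degenerate component $\tfrac1{\ss{2\kk}}\Pi_{Z_t}\,\d\hat W_t$ together with \eqref{E4}. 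In parallel, from \eqref{E*6} together with \eqref{E0} (which makes $\<x,b_0(x)\>=o(|x|^2)$) and \eqref{E4}, I would derive for every $q\ge2$ a uniform-in-time moment estimate $\sup_{t\ge0}\E|Y^x_t|^q\le c(1+|x|^q)$ and, more importantly, the infinitesimal Lyapunov bound $\mathscr L W\le-c_1W+c_2$ for $W(x,y):=1+|x|^p+|y|^p$, where $\mathscr L$ is the generator of the coupled flow (the reflection leaves each marginal generator, and hence its diffusion part $\si\si^*$, unchanged).

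\textbf{Test function and conclusion.} Next I would build a bounded, increasing, concave, piecewise-$C^2$ function $f$ on $[0,\8)$ with $c_*(1\wedge r^{\theta})\le f(r)\le c^{**}(1\wedge r^{\theta})$, behaving like $r^{\theta}$ near $0$ — so that $f''(r)\asymp-r^{\theta-2}$ there — and constant beyond a cut-off $\ell$, solving a differential inequality $\tfrac1\kk f''(r)+\tfrac12\phi_*(r)f'(r)\le-\kk_0f(r)$ on $[0,\ell]$ in which, crucially, the near-diagonal rate is so large (it tends to $+\8$ as $r\to0$) that it dominates the additive Lyapunov constant and the Itô cross-terms below; the level $\ell$ is chosen so large that $|Z_t|\ge\ell$ forces $\max(|Y_t|,|\hat Y_t|)\ge\ell/2$, hence $W\ge1+(\ell/2)^p>c_2/c_1$. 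With $F:=f(|\cdot-\cdot|)W$, comparable to $\psi_{p,\theta}$ up to $c_*,c^{**}$, I would apply Itô/Tanaka to $\e^{\lambda^* t}F(Y_t,\hat Y_t)$ for $t<\tau$ and split on $\{|Z_t|\le\ell\}$ versus $\{|Z_t|>\ell\}$. On the former the distance part yields $-\kk_0f(|Z_t|)W$, the moment part yields $f(|Z_t|)(-c_1W+c_2)$, and the Itô cross-bracket between $f(|Z_t|)$ and $W$ is of order $f'(|Z_t|)|Z_t|(|Y_t|^{p-2}+|\hat Y_t|^{p-2})\asymp|Z_t|^{\theta}(\cdots)$ from the $\d\hat W$-part (using $\Pi_{Z_t}{\bf n}(Z_t)=-{\bf n}(Z_t)$), absorbed by Young's inequality, and of order $f'(|Z_t|)|Z_t|^{3/4}(|Y_t|^{p-1}+|\hat Y_t|^{p-1})\asymp|Z_t|^{\theta-1/4}(\cdots)$ from the $\d\tt W$-part, dominated near the diagonal by the strongly negative term $\tfrac1\kk f''(|Z_t|)W\asymp-|Z_t|^{\theta-2}W$. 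On $\{|Z_t|>\ell\}$, $F$ reduces to $c^{**}W$ and $\mathscr L F\le-c_1c^{**}W+c_2c^{**}<0$. Choosing $\lambda^*>0$ small enough makes the $\d t$-drift of $\e^{\lambda^* t}F(Y_t,\hat Y_t)$ nonpositive for $t<\tau$; since $F(Y_t,\hat Y_t)\equiv0$ for $t\ge\tau$, we obtain $\e^{\lambda^* t}\E F(Y_t,\hat Y_t)\le\E F(Y_0,\hat Y_0)$, and optimizing over the initial coupling of $\mu,\nu$ yields \eqref{EE6} with $C^*=c^{**}/c_*\ge1$. Finally, the uniform moment bound above plus Krylov–Bogoliubov furnishes an invariant measure in $\mathscr P_{\psi_{p,\theta}}(\R^d)$, whose uniqueness follows from \eqref{EE6}.

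\textbf{Main obstacle.} The delicate point is the joint design of $f$, the rate $\kk_0$ and the cut-off $\ell$ so that the single product test function $F$ has, after the $\e^{\lambda^* t}$-twist, a pointwise nonpositive drift on all of $\R^d\times\R^d$: one must extract a genuine contraction rate near the diagonal from the elliptic part alone — against the sign-indefinite Hölder-plus-monotone drift and the merely $\tfrac34$-Hölder modulus of $\tt\si$ — that is strong enough to swallow the Itô cross-bracket between the distance factor $f(|x-y|)$ and the moment factor $1+|x|^p+|y|^p$ as well as the additive Lyapunov constant $c_2$, all while keeping the free parameters ($\kk_0$, $\ell$, $\lambda^*$, the Young's-inequality threshold, and the $r^{\theta}$-to-constant gluing of $f$) mutually compatible; should the clean supermartingale property fail on a bounded annulus in $|Z_t|$, one falls back on controlling the coupling time $\tau$, which in this uniformly elliptic setting is accessible from the polynomial uniform moments of Step 2.
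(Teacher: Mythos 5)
Your overall strategy coincides with the paper's: the same reflection coupling as in Proposition \ref{pro1}, a concave distance function $f\asymp 1\wedge r^{\theta}$ designed through the ODE $\tfrac1\kk f''+\tfrac12\phi f'\le -cr^{\theta}$ and frozen beyond a cut-off radius determined by the sublevel set of the Lyapunov function, a product test function coupling $f(|x-y|)$ with the moment weight, a split into the regimes $|x-y|\le l_p^*$ and $|x-y|>l_p^*$, and Krylov--Bogoliubov plus the contraction for existence and uniqueness of the invariant measure. However, there is a genuine gap in the way you propose to close the drift inequality. You take the weight $1+|x|^p+|y|^p$ with no small parameter, and you claim the It\^o cross-bracket between $f(|Z_t|)$ and the moment factor is ``dominated near the diagonal by the strongly negative term $\tfrac1\kk f''(|Z_t|)W\asymp-|Z_t|^{\theta-2}W$''. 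That domination is only available as $r=|x-y|\to0$; on the intermediate range $r\in[r_1,l_p^*]$ the function $f''$ is bounded, and the cross terms are of size $f'(r)\,r\,(|x|^{p-1}+|y|^{p-1}+|x|^{p-2}+|y|^{p-2})$ with a constant determined by $p,\kk,K_2$ that is in no way small. The only negative terms left there are $-c\,r^{\theta}(1+|x|^p+|y|^p)$ (with $c$ essentially fixed once the ODE is normalized — rescaling $f$ multiplies both sides equally, and pumping up $|f''|$ on an annulus forces $f'$, hence the cross term, up as well) and $f(r)(-C_1(p)(V_p(x)+V_p(y))+2C_2(p))$, which is not even negative when $|x|,|y|$ are of order one. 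After Young's inequality the cross term leaves an additive constant that cannot be beaten by $-c\,r^{\theta}W$ when $W$ is of order one, so the pointwise supermartingale inequality you need fails precisely in the regime of moderate $|x|,|y|$ and moderate $r$, for generic values of the constants.

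The paper's proof resolves exactly this point by weighting the moment factor with a tunable small parameter: it works with $\Psi(t)=f(|Z_t|)\bigl(1+\vv V_p(Y_t)+\vv V_p(\hat Y_t)\bigr)$ and chooses $\vv$ as in \eqref{E*5}. Then the cross-bracket contribution $\Theta_2$ inherits a factor $\vv$, and after Young's inequality (with exponents tuned to $\vv$) it is bounded by $\vv^{1/p}$ times $|x-y|^{\theta}(1+\vv V_p(x)+\vv V_p(y))$ (see \eqref{W2}); likewise the additive Lyapunov constant enters as $2C_2(p)\vv f(|x-y|)$ and is absorbed into the $-\tfrac12 r^{\theta}$ margin (see \eqref{W4}). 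Since $1+\vv V_p(x)+\vv V_p(y)$ is still comparable to $1+|x|^p+|y|^p$ up to constants depending on $\vv$, the contraction \eqref{EE6} for $\mathbb W_{\psi_{p,\theta}}$ follows unchanged. Your closing remark about ``falling back on controlling the coupling time $\tau$'' acknowledges the possible failure on an annulus but is not an argument — it would require a separate local-coupling/Doeblin-type construction that you do not carry out. To repair your proof along the paper's lines, insert the small parameter $\vv$ into the weight and record explicitly the two smallness constraints it must satisfy (one against the cross terms, one against $C_2(p)$), as in \eqref{E*5}.
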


\begin{proof}
Throughout the whole proof to be implemented, we still utilize  the coupling constructed in the proof of Proposition \ref{pro1}; see \eqref{E8} and \eqref{E10} for more details. In view of \eqref{E0}, \eqref{E4} and \eqref{E*6}, for any $p\ge2$ and $V_p(x):=1+|x|^p, x\in\R^d,$
 there are constants $C_1(p),C_2(p)>0$  such that
\begin{equation}\label{E*8}
(\mathscr L V_p)(x)\le -C_1(p)V_p(x)+C_2(p),\quad x\in\R^d,
\end{equation}
where $\mathscr L$ is the infinitesimal generator of \eqref{E4}.
For the parameters   $C_1(p),C_2(p)$ above,  the  set
\begin{equation*}
 \mathcal A_p:=\big\{(x,y)\in\R^d\times\R^d:C_1(p)\big(V_p(x)+V_p(y)\big)\le 4C_2(p)\big\}
\end{equation*}
is of finite length since the mapping $\R^d\ni x\mapsto V_p(x)$ is compact. Therefore, the quantity
\begin{equation*}
l_p^*:=1+\sup\big\{|x-y|:(x,y)\in  \mathcal A_p\big\}<\8
\end{equation*}
is well defined.

In order to achieve the exponential contractivity  \eqref{EE6}, it is vital to define two  auxiliary functions as below.
Define for $\theta\in(0,1],$
\begin{equation}\label{W10}
h(r)= \kk \int_0^{r}\e^{-\frac{\kk}{2}\int_0^u((\lambda +2K_2\kk^3) v+ 2K_1v^\alpha)\,\d v} \int_u^{l_p^*}v^\theta\e^{\frac{\kk}{2}\int_0^v((\lambda +2K_2\kk^3) l+ 2K_1l^\alpha)\,\d l}\,\d v \,\d u,\quad r\ge0,
\end{equation}
where
  $K_1>0$, $\kk>0$   and $\lambda>0$ were introduced in   \eqref{E0}, \eqref{E4} and \eqref{E*7}, respectively, and $\alpha\in(0,1)$ is the H\"older index associated with $b_0$. Moreover, we define   for $\theta\in(0,1],$
\begin{equation}\label{E*4}
f(r)=c^*(r\wedge l_p^*)^\theta+h(r\wedge l_p^*),\quad r\ge0,
\end{equation}
where
\begin{equation}\label{W11}
  c^*:=\frac{1}{\theta(\lambda+%2 
4K_2\kk^3+2K_1r_0^{\alpha-1})}\I_{(0,1)}(\theta)\qquad \mbox{ with }\qquad  r_0:=1\wedge\Big(\frac{1-\theta}{ 2%1 
K_1\kk}\Big)^{\frac{1}{1+\alpha}}.
\end{equation}

In the sequel, we shall fix the    function $f$ defined  in \eqref{E*4} and choose  the tuneable parameter
\begin{equation}\label{E*5}
\begin{split}
\vv:=1&\wedge \frac{1}{16C_2(p)(c^* +h'(0) (l_p^*)^{1-\theta})}\\
&\wedge\frac{1}{\big(2^{4+\frac{4}{p}}c^\star((p-2)^{1-\frac{2}{p}} \vee (p-1)^{1-\frac{1}{p}})(c^*\theta +h'(0) (l_p^*)^{1-\theta} )\big)^p},
\end{split}
\end{equation}
where
\begin{equation}\label{W1}
c^\star:=\big(\frac{1}{\kk} 2^{(p-2)\vee 1}(p-1)\big)\vee (2K_2 ^{\frac{1}{2}}\kk^2).
\end{equation}
Note that, for the case $p=2$,  the term $(p-2)^{1-\frac{2}{p}}$ should be  understood in the limit sense, that is, $\lim_{p\downarrow2}(p-2)^{1-\frac{2}{p}}=1$.
Moreover, in the following context,   for the sake of convenience,   we shall write
\begin{equation*}
\Psi(t):=f(|Z_t|)\big(1+\vv V_p(Y_t)+\vv V_p(\hat Y_t )\big),\quad t\ge 0.
\end{equation*}

By applying It\^o's formula, we deduce from   \eqref{E8} and \eqref{E10} that
\begin{equation}\label{E*1}
\begin{split}
\d \Psi(t)&=(1+\vv V_p(Y_t)+\vv V_p(\hat Y_t ))\I_{\{0<|Z_t|\le l_p^*\}}\d f(|Z_t|)\\
&\quad+\vv f(|Z_t|)\d (|Y_t|^p+|\hat Y_t|^p)+\vv\d\<|Y_\cdot|^p+|\hat Y_\cdot|^p,f(|Z_\cdot|)\>(t),\quad t<\tau,
\end{split}
\end{equation}
in which $Z_t:=Y_t-\hat Y_t$ and
$\<\xi,\eta\>(t)$ means the quadratic variation of stochastic processes $(\xi_t)_{t\ge0}$ and $(\eta_t)_{t\ge0}$. On the one hand,  applying the It\^o-Tanaka formula to \eqref{E15}, followed by taking \eqref{E0}, \eqref{E*7}, \eqref{E11} and $f''\le 0$ into consideration yields
\begin{equation*}
\begin{split}
\d f(|Z_t|)&\le \Big(\frac{1}{2 }f'(|Z_t|) \big(\lambda  |Z_t|+%2 
4K_2\kk^3|Z_t|+ 2K_1|Z_t|^\alpha\big)+\frac{1}{\kk}f''(|Z_t|)\Big)\,\d t \\
&\quad+f'(|Z_t|)\Big(\big\<(\tilde{\si}(Y_t)-\tilde{\si}(\hat Y_t) )^*{\bf n}(Z_t ), \d \tilde{W}_t\big\>+\frac{1}{\ss {\kk/2}}\big\<{\bf n}(Z_t ),\d\hat{W}_t\big\>\Big),\quad t<\tau.
\end{split}
\end{equation*}
On the other hand, by applying It\^o's formula once more and
making use of  the Lyapunov condition \eqref{E*8}, we derive that
\begin{equation*}
\begin{split}
\d \big(|Y_t|^p+|\hat Y_t|^p\big)&\le \big\{-C_1(p) (V_p(Y_t)+V_p(\hat Y_t) )+2C_2(p)\big\}\,\d t\\
 &\quad+p\<|Y_t|^{p-2}\tilde{\si}(Y_t)^*Y_t+|\hat Y_t|^{p-2}\tilde{\si}(\hat Y_t)^*\hat Y_t,\d \tilde{W}_t\>\\
&\quad  +\frac{p}{\ss {2\kk}} \<|Y_t|^{p-2}Y_t+|\hat Y_t|^{p-2}\Pi_{Z_t} \hat Y_t, \d \hat{W}_t\>,\quad t<\tau.
\end{split}
\end{equation*}
Consequently,  combining the estimates on $\d f(|Z_t|)$ and $\d \big(|Y_t|^p+|\hat Y_t|^p\big)$ with \eqref{E*1} and
\begin{align*}
\frac{1}{|x-y|}\<x-y,|x|^{p-2}x+|y|^{p-2}\Pi_{x-y}y\>=\frac{1}{|x-y|}\<x-y,|x|^{p-2}x-|y|^{p-2}y\>,\quad x\neq y
\end{align*}
enables us to
derive that
\begin{equation*}
\d \Psi(t)
 \le\big(\Theta_1+\Theta_2\big)(Y_t,\hat Y_t)\,\d t +\d M_t,\quad t<\tau
\end{equation*}
for some underlying martingale $(M_t)$, where for any $x,y\in\R^d$ with $x\neq y,$
\begin{equation*}
\begin{split}
\Theta_1(x,y):&=(1+\vv V_p( x)+\vv V_p(y))\\
&\quad\times\Big(\frac{1}{2 }f'(|x-y|) \big(\lambda  |x-y|+%2 
4K_2\kk^3|x-y|+2 K_1|x-y|^\alpha\big) +\frac{1}{\kk}f''(|x-y|)\Big)\\
 &\quad\times\I_{\{0<|x-y|\le l_p^*\}} +\vv  f(|x-y|)\big(-C_1(p)\big(V_p(x)+ V_p(y)\big)+2C_2(p)\big), \\
\Theta_2(x,y):&=p\vv f'(|x-y|)\Big(\frac{1}{\kk}\big||x|^{p-2}x-|y|^{p-2}y\big|\\
  &\qquad\qquad\qquad\quad+ \big|(\tilde{\si}(x)-\tilde{\si}(y) )^*\big(|x|^{p-2}\tilde{\si}(x)^*x+|y|^{p-2}\tilde{\si}(y)^*y\big)\big|
\Big).
\end{split}
\end{equation*}

In case  of
\begin{equation}\label{W3}
\Theta_1(x,y)+\Theta_2(x,y)
\le -\lambda^\star  f(|x-y|)\big(1+\vv V_p(x)+ \vv V_p(y)  \big),\qquad x,y\in\R^d,
\end{equation}
where
$$\lambda^\star:= \frac{1}{4(c^* +h'(0) (l_p^*)^{1-\theta} )}\wedge \frac{ C_1(p)\vv }{  1+2\vv  },$$
we then at once arrive at
\begin{equation*}
\d \Psi (t)
 \le-\lambda^*\Psi(t)\,\d t +\d M_t,\quad t<\tau.
\end{equation*}
Subsequently, via the It\^o formula, the estimate
\begin{equation*}
\E\big(\e^{\lambda^* (t\wedge \tau)}\Psi (t\wedge \tau)\big)\le  \Psi (0)
\end{equation*}
is available so the assertion \eqref{EE6} is attainable by taking advantage of the fact that
\begin{equation*}
\big(c^*\wedge f(l_p^*)\big)(1\wedge r^\theta)\le f(r)\le \big(f(l_p^*)\vee \big(c^*+h'(0)(l_p^*)^{1-\theta}\big)\big)(1\wedge r^\theta),
\end{equation*}
and $Z_t=0$ for $t\ge\tau. $
On the basis of the preceding analysis, it all boils down to  the confirmation of \eqref{W3} in order to achieve \eqref{EE6}.

In accordance with the definition of the function $h$ introduced in \eqref{W10},  a direct calculation reveals  that
 \begin{equation*}
\frac{1}{2 }h'(r)  (\lambda   r+%2 
4K_2\kk^3 r+ 2K_1r^\alpha)+\frac{1}{\kk}h''(r)=-r^\theta,\qquad r\in(0,l_p^*]
 \end{equation*}
so that for all $r\in(0,l_p^*]$,
\begin{equation}\label{W12}
\begin{split}
 &\frac{1}{2 }f'(r)  (\lambda   r+%2 
4K_2\kk^3 r+ 2K_1r^\alpha)+\frac{1}{\kk}f''(r)\\
 &= c^*\theta\big(\frac{1}{2 }  \big(\lambda  r^{\theta} +%2 
4K_2\kk^3 r^\theta+ 2K_1r^{\theta+\alpha-1} \big)-\frac{1}{\kk} (1-\theta)r^{\theta-2}\big)-r^\theta.
 \end{split}
\end{equation}
By  noting that
 \begin{equation*}
 2K_1r^{\theta+\alpha-1} -\frac{1}{\kk} (1-\theta)r^{\theta-2}\le 0,\qquad r\in(0,r_0],
 \end{equation*}
 where $r_0>0$ was introduced in \eqref{W11},  we right away have
 \begin{equation*}
 \frac{1}{2 }f'(r)  (\lambda   r+%2 
4K_2\kk^3 r+ 2K_1r^\alpha)+\frac{1}{\kk}f''(r)\le-\big(1-\frac{1}{2 } (\lambda +%2 
4K_2\kk^3 )c^*\theta\big) r^{\theta},\quad r\in(0,r_0].
\end{equation*}
 Furthermore,  by invoking \eqref{W12} again,
 we apparently infer  from $\alpha\in(0,1)$   that for all  $r\in [r_0,l_p^*]$,
  \begin{equation*}
 \frac{1}{2 }f'(r)  (\lambda   r+%1 
4K_2\kk^3 r+ 2K_1r^\alpha)+\frac{%2
 1}{\kk}f''(r)\le- \big(1-\frac{1}{2 }  c^*\theta\big(\lambda  +4 K_2\kk^3 +2K_1r_0^{\alpha-1} \big) \big)r^\theta.
  \end{equation*}
 Consequently, taking the choice of $c^*>0$ given in \eqref{W11} into consideration yields that
  \begin{equation*}
 \frac{1}{2 }f'(r)  (\lambda   r+%2 
4K_2\kk^3 r+2 K_1r^\alpha)+\frac{1}{\kk}f''(r)\le- \frac{1}{2}r^\theta,\qquad r\in(0,l_p^*].
  \end{equation*}
 This definitely implies that
\begin{equation}\label{W13}
\begin{split}
\Theta_1(x,y)&\le-\frac{1}{2}|x-y|^\theta(1+\vv V_p( x)+\vv V_p(y))\I_{\{0<|x-y|\le l_p^*\}}\\
&\quad+\vv  f(|x-y|)\big(-C_1(p)\big(V_p(x)+ V_p(y)\big)+2C_2(p)\big),\qquad x,y\in\R^d.
\end{split}
\end{equation}

Next,
by means of \eqref{E*}, \eqref{E11} and \eqref{E4}, in addition to $h'(r)\le h'(0)$ for any $r\in[0,l_p^*],$ it follows from Young's inequality that for any $\alpha,\beta>0$ and $p>2,$
\begin{equation*}
\begin{split}
\Theta_2(x,y)&\le p\vv\phi( |x-y|) \Big(\frac{1}{\kk} 2^{(p-2)\vee1}(p-1) \big(|x|^{p-2}+|y|^{p-2}\big)+2{K_2}^{\frac{1}{2}}\kk^2 \big(|x|^{p-1} +|y|^{p-1} \big)\Big)\\
&\le  pc^\star \vv \phi( |x-y|)\big( |x|^{p-2}+|y|^{p-2}+ |x|^{p-1} +|y|^{p-1}  \big)\\
&\le   c^\star \phi( |x-y|)\Big( (p-2)\alpha\big(\vv |x|^p+\vv|y|^p+\frac{4}{p-2}\alpha^{-\frac{p}{2}}\vv\big)\\
&\qquad\qquad\qquad\quad+ (p-1)\beta\big(\vv|x|^p+\vv|y|^p+\frac{2}{p-1}\bb^{-p}\vv\big)\Big),\qquad x,y\in\R^d,
\end{split}
\end{equation*}
where $c^\star>0$ was defined as in \eqref{W1}, and $\phi(r):=c^*\theta r^{\theta}+h'(0)r,r\ge0.$
In particular, choosing $\alpha=\Big(\frac{4\vv}{p-2}\Big)^{\frac{2}{p}}$ and $\beta=\Big(\frac{ 4\vv}{p-1}\Big)^{\frac{1}{p}}$, respectively, and taking the alternative of $\vv$ given in \eqref{W1} and $l_p^*\ge1$ into account  leads to
\begin{equation}\label{W2}
\begin{split}
\Theta_2(x,y)
&\le   2^{1+\frac{4}{p}}c^\star\Big((p-2)^{1-\frac{2}{p}} \vee (p-1)^{1-\frac{1}{p}}\Big)\vv^{\frac{1}{p}} \phi( |x-y|)\big( 1+ \vv V_p( x)+\vv V_p(y) \big)\\
&\le 2^{1+\frac{4}{p}}c^\star\Big((p-2)^{1-\frac{2}{p}} \vee (p-1)^{1-\frac{1}{p}}\Big) \big(c^*\theta +h'(0) (l_p^*)^{1-\theta} \big)\vv^{\frac{1}{p}}|x-y|^{\theta}\\
&\quad\times \big( 1+ \vv V_p( x)+\vv V_p(y) \big)\\
&\le \frac{1}{8} |x-y|^\theta\big(  1+\vv V_p( x)+\vv V_p(y)  \big),\qquad x,y\in\R^d.
\end{split}
\end{equation}
Whereafter, for any $x,y\in\R^d$ with $|x-y|\le l_p^*$,   estimates \eqref{W13}  and  \eqref{W2}  yield that
\begin{equation}\label{W4}
\begin{split}
\Theta_1(x,y)+\Theta_2(x,y)&\le -\frac{3}{8}(1+\vv V_p( x)+\vv V_p(y))|x-y|^\theta+2C_2(p)\vv  \big(c^*|x-y|^\theta+h'(0)|x-y|\big)\\
&\le  -\frac{1}{4}(1+\vv V_p( x)+\vv V_p(y))|x-y|^\theta\\
&\le -\frac{1}{4(c^* +h'(0) (l_p^*)^{1-\theta} )} f(|x-y|)(1+\vv V_p( x)+\vv V_p(y)),
\end{split}
\end{equation}
where in the first inequality   we also used the fact that $h(r)\le h'(0)r$ for all $r\in[0,l_p^*],$  in the second inequality we employed
the choice of $\vv$ provided in \eqref{E*5}, and in the last inequality we exploited the fact that
\begin{equation*}
c^*r^\theta\le f(r)\le \big(c^* +h'(0) (l_p^*)^{1-\theta} \big)r^\theta,\qquad r\in[0,l_p^*].
\end{equation*}

For any $x,y\in\R^d$ with $|x-y|>l_p^* $ (which obviously indicates  $(x,y)\notin  \mathcal A_p$),  with the aid of  $f'(r)=0$
for any $r>l_p^*,$ we deduce from the notions of $\Theta_1$ and $\Theta_2$ that
\begin{equation}\label{W6}
\begin{split}
\Theta_1(x,y)+\Theta_2(x,y)&\le-\frac{1}{2}C_1(p)\vv  f(|x-y|)\big( V_p(x)+   V_p(y)   \big)\\
&=-\frac{C_1(p)\vv (V_p(x)+   V_p(y))}{2(1+\vv (V_p(x)+   V_p(y)))}  f(|x-y|)\big(1+\vv V_p(x)+ \vv V_p(y)  \big)\\
&\le -\frac{ C_1(p)\vv }{  1+2\vv  }  f(|x-y|)\big(1+\vv V_p(x)+ \vv V_p(y)  \big),
\end{split}
\end{equation}
where the last line  is due to $V_p\ge1$.

At length, \eqref{W3} is verifiable by combining \eqref{W4} with \eqref{W6} concerning  the cases $|x-y|\le l_p^* $ and $|x-y|> l_p^* $ for all $x,y\in\R^d$, respectively.

Once \eqref{EE6} is available, the existence and uniqueness of invariant probability measures in $\mathscr P_{\psi_{p,\theta}}(\R^d)$ can be derived by following exactly the line in \cite[Corollary 4.11]{HMS}.
\end{proof}

With Proposition \ref{pro2} at hand, we are in position to finish the
\begin{proof}[Proof of Theorem \ref{CLT}]
Inspired  essentially by the procedure in \cite{Sh}, we shall decompose the additive functional $\bar A_t^{f,x}: =\frac{1}{\ss t}\int_0^tf(X_s^x)\,\d s$ for $f\in  C_{p,\theta}(\R^d)$ with $\mu(f)=0$ into two parts, where the one part is concerned with the additive functional of a martingale under consideration, and the other part is the corresponding remainder term. In order to achieve the desired convergence rate in the CLT for the additive functional $\bar A_t^{f,x}$, we shall adopt the convergence rate concerning the CLT for martingales (see e.g.  \cite[Theorem 3.10]{HH}) to treat the martingale part involved, and meanwhile exploit the contractive property (i.e., \eqref{EE6}) to handle the remainder term.

 In following proof, we shall fix $f\in  C_{p,\theta}(\R^d)$ with $\mu(f)=0$. For $x\in\R^d$ and $t\ge1,$ let
\begin{equation*}
M_t^{f,x}=\int_0^t\{f(X_s^x)-(P_sf)(x)\}\,\d s+\int_t^\8\{(P_{s-t}f)(X_t^x)-(P_sf)(x)\}\,\d s.
\end{equation*}
It can readily be noted that the additive functional $\bar A_t^{f,x}$ can be rewritten as below:
\begin{equation}\label{W99}
\begin{split}
\bar A_t^{f,x} &=\frac{1}{\ss {\lfloor t\rfloor} }M_{\lfloor t\rfloor}^{f,x}+\bigg(\big({\lfloor t\rfloor}^{-\frac{1}{2}}-  t^{-\frac{1}{2}}\big)M_{\lfloor t\rfloor }^{f,x},\\
&\qquad\qquad\qquad\qquad+t^{-\frac{1}{2}} \Big( \int_{\lfloor t\rfloor }^tf(X_s^x)\,\d s+ \int_0^\8\big( (P_sf)(x)  - (P_sf)(X_{\lfloor t\rfloor }^x)\big)\,\d s\Big)\bigg)\\
&=:\bar M_t^{f,x}+R_t^{f,x}.
\end{split}
\end{equation}

Recall from \cite[Lemma 2.9]{Sh}  the basic fact that for any real-valued random variables $\xi,\eta$ and any $\aa>0,\sigma\ge0,$
\begin{equation*}
\sup_{z\in\R}\big|\P(\xi\le z)-\Phi_\sigma(z)\big|\le \sup_{z\in\R}\big|\P(\eta\le z)-\Phi_\sigma(z)\big|+\P(|\xi-\eta|>\aa)+c_\sigma\aa,
\end{equation*}
where $c_\sigma:=\frac{1}{\sigma\ss{2\pi}}\I_{\{\sigma>0\}}+2\I_{\{\si=0\}}$. Thus, the decomposition \eqref{W99} enables us to derive that
for any $\aa>0 $ and $\sigma\ge0$,
\begin{equation*}
\sup_{z\in\R^d}\big|\P\big(\bar A_t^{f,x}\le z\big)-\Phi_\sigma(z)\big|\le \sup_{z\in\R^d}\big|\P\big(\bar M_{\lfloor t\rfloor}^{f,x}\le z\big)-\Phi_\sigma(z)\big|+\P\big(|R_t^{f,x}|>\aa\big)+c_\sigma\aa .
\end{equation*}
Hence, the desired assertion \eqref{EE6} follows as soon as,   for any $\vv\in(0,1/4)$,  there exists a constant $C_\vv(x)>0$ such that
\begin{equation}\label{EE7}
\P\big(|R_t^{f,x} |>t^{-\frac{1}{4}} \big) \le C_\vv(x) t^{-\frac{1}{4}},\qquad \sup_{z\in\R^d}\big|\P\big(\bar M_{\lfloor t\rfloor}^{f,x}\le z\big)-\Phi_\sigma(z)\big|\le C_\vv(x) t^{-\frac{1}{4}+\vv}.
\end{equation}

Due to  $  {\lfloor t\rfloor}^{-\frac{1}{2}}-  t^{-\frac{1}{2}}<   t^{-\frac{1}{2}} $ for any $t\ge1$,
 it follows from Chebyshev's inequality that for $t\ge1, $
 \begin{equation}\label{W5}
 \begin{split}
\P\big(|R_t^{f,x}|>t^{-\frac{1}{4}}\big)
&\le  t^{\frac{1}{4}}\Big(\big({\lfloor t\rfloor}^{-\frac{1}{2}}-  t^{-\frac{1}{2}}\big)\E\Big|\int_0^{\lfloor t\rfloor }f(X_s^x)\,\d s\Big|+  t^{-\frac{1}{2}}\Theta_p(t,x)\Big),
\end{split}
\end{equation}
where
$$\Theta_p(t,x):=\int_{\lfloor t\rfloor }^t\E|f(X_s^x)|\,\d s+2 \int_0^\8\big(| (P_sf)(x)|  +\E\big|(P_sf)(X_{\lfloor t\rfloor }^x)\big|\big)\,\d s.$$
 For any $q\ge2$, applying It\^o's formula followed by taking advantage of  \eqref{E*8}  yields that for some constant $C_1(q)>0,$
\begin{equation}\label{W7}
\sup_{t\ge s }\E|X_t^x|^q\lesssim 1+\E|X_s^x|^q,\quad s\ge 0
\end{equation}
so that, for $f\in C_{p,\theta}(\R^d)$,
\begin{equation}\label{W7*}
\sup_{t\ge s}\E f(X_t^x)\lesssim 1+\E|X_s^x|^p \lesssim 1+|x|^p,\quad s\ge0.
\end{equation}
Accordingly, Proposition \ref{pro2}, together with  $\mu(f)=0$,   implies  that
\begin{equation}\label{W8}
\Theta_p(t,x)
\lesssim  1+|x|^p +\|f\|_{p,\theta} \int_0^\8\e^{-\lambda^* s}\big(1+|x|^p+\E|X_{\lfloor t\rfloor }^x|^p\big)\,\d s\lesssim 1+|x|^p.
\end{equation}
Further,  owing to the Markov property of $(X_t^x)_{t\ge0}$, Proposition \ref{pro2}  and \eqref{W7},  we deduce  that
\begin{equation*}
\begin{split}
\E\Big|\int_0^{\lfloor t\rfloor }f(X_s^x)\,\d s\Big|^2&=2\int_0^{\lfloor t\rfloor }\int_{s}^{ \lfloor t\rfloor  }\E\big( f(X_s^x)(P_{u-s}f)(X_s^x)\big)\,\d u\d s\\
&\lesssim \|f\|_{p,\theta}^2\int_0^{\lfloor t\rfloor }\int_{s}^{ \lfloor t\rfloor  }\e^{-\lambda^*(u-s)} \big( 1+\E|X_s^x|^{2p}\big)\,\d u\d s\\
&\lesssim \|f\|_{p,\theta}^2(1+|x|^{2p})\lfloor t\rfloor.
\end{split}
\end{equation*}
This further gives that
\begin{equation}\label{W9}
\begin{split}
\big({\lfloor t\rfloor}^{-\frac{1}{2}}-  t^{-\frac{1}{2}}\big)\E\Big|\int_0^{\lfloor t\rfloor }f(X_s^x)\,\d s\Big| &\lesssim \|f\|_{p,\theta} (1+|x|^{ p})\big(t^{\frac{1}{2}}-   \lfloor t\rfloor^{\frac{1}{2}}\big)t^{-\frac{1}{2}}\lesssim \|f\|_{p,\theta} (1+|x|^{ p}) t^{-\frac{1}{2}},
\end{split}
\end{equation}
where   the second inequality is valid thanks to   $t^{\frac{1}{2}}-   \lfloor t\rfloor^{\frac{1}{2}}\le (t-\lfloor t\rfloor)^{\frac{1}{2}}\le1.$ Subsequently,
plugging \eqref{W8} and \eqref{W9} back into \eqref{W5} guarantees the validity of the first statement in \eqref{EE7}.

We proceed  to  verify the second statement in \eqref{EE7}. In light of  Proposition \ref{pro2} and by invoking the semigroup property of $(P_t)_{t\ge0}$, it is easy to see that $(M_t^{f,x})_{t\ge0}$ is a square integrable martingale with the zero mean. Note  that $(M_n^{f,x})_{n\ge1}$ can be reformulated as follows: for any integer $n\ge1,$
\begin{equation*}
M_n^{f,x}=\sum_{i=1}^nZ_i^{f,x} \qquad \mbox{ with } \qquad Z_i^{f,x} :=M_i^{f,x}-M_{i-1}^{f,x}.
\end{equation*}
Trivially, according to the definition of $M_n^{f,x}$, we have
 for $1\le i\le n,$
\begin{equation*}
\begin{split}
Z_i^{f,x}
&=\int_{i-1}^if(X_s^x)\d s+R_f(X_i^x)  -R_f(X_{i-1}^x) \qquad \mbox{ with } \qquad R_f(x):=\int_0^\8 (P_sf)(x) \,\d s.
\end{split}
\end{equation*}

By means of the property of conditional expectation and the flow property of $(X_t^x)_{t \geq 0}$, it follows readily that
$$
\sum_{i=1}^n \mathbb{E}\big|Z_i^{f, x}\big|^2=\sum_{i=1}^n \mathbb{E}(\mathbb{E}(|Z_i^{f, x}|^2 | \mathscr{F}_{i-1}))=\sum_{i=1}^n \mathbb{E} \varphi_f(X_{i-1}^x),
$$
where
$$
\varphi_f(x):=\mathbb{E}\Big|\int_0^1 f(X_s^x)+R_f(X_1^x)-R_f(x)\Big|^2 .
$$
By applying Proposition \ref{pro2} and following exactly the routine of \cite[Lemma $4.1 \&$ Lemma 4.2]{BWYa}, we can deduce that $\varphi_f \in C_{2 p, \theta}(\mathbb{R}^d)$ satisfying
\begin{equation}\label{P2}
0 \le \mu(\varphi_f
)=2 \mu(f R_f)<\infty, \quad\|\varphi_f\|_{2 p, \theta} \lesssim\|f\|_{p, \theta}^2 .
\end{equation}
In addition, for any $q>\frac{1}{2}$ and $1 \le i \le n$, we apparently have,
$$
\mathbb{E}\big|Z_i^{f, x}\big|^{4 q} \le 3^{4 q-1}\big(\int_{i-1}^i \mathbb{E}|f(X_s^x)|^{4 q} \mathrm{~d} s+\mathbb{E}|R_f(X_i^x)|^{4 q}+\mathbb{E}|R_f(X_{i-1}^x)|^{4 q}\big),
$$
which, besides the fact that
$$
|R_f(x)| \lesssim\|f\|_{p, \theta}(1+|x|^p), \quad x \in \mathbb{R}^d,
$$
\eqref{W7}, and \eqref{W7*}, leads to
$$
\max _{1 \le i \le n} \mathbb{E}|Z_i^{f, x}|^{4 q} \lesssim 1+|x|^{4 p q}, \quad n \ge 1 .
$$
As a consequence, by applying the Berry-Esseen type estimate associated with martingales (see, for instance, \cite[Theorem 3.10]{BWY}) we derive that
\begin{equation}\label{WW1}
\begin{aligned}
& \sup _{z \in \mathbb{R}}\Big|\mathbb{P}\big(M_n^{f, x} / \sqrt{\mu(\varphi_f) n} \le z\big)-\Phi_1(z)\Big| \\
& \lesssim\bigg(n^{-q}+(\mu(\varphi_f))^{-2 q} \mathbb{E} {\Big|\frac{1}{n} \sum_{i=1}^n \varphi_f(X_{i-1}^x)-\mu(\varphi _f) \Big|}^{2 q}\bigg)^{\frac{1}{4 q+1}} \lesssim n^{-\frac{q}{4 q+1}},
\end{aligned}
\end{equation}
where in the last display we also utilized the fact that
\begin{equation}\label{ee}
\mathbb{E}\Big|\frac{1}{n} \sum_{i=1}^n \varphi_f(X_{i-1}^x)-\mu(\varphi_f)\Big|^{2 q} \lesssim n^{-q}
\end{equation}
by tracing exactly the line to derive \eqref{WW} and taking Proposition \ref{pro2} into account.
Concerning or the case $\sigma^2_*:={\mu(\varphi_f)}>0$, taking advantage of \eqref{WW1} gives that
$$
\begin{aligned}
\sup _{z \in \mathbb{R}^d}\Big|\mathbb{P}\big(\bar{M}_n^{f, x} \le z\big)-\Phi_{\sigma_*}(z)\Big| & =\sup _{z \in \mathbb{R}^d}\Big|\mathbb{P}\big(M_n^{f, x} /(\sqrt{n} {\sigma_*}) \le z / {\sigma_*}\big)-\Phi_{\sigma_*}(z)\Big| \\
& =\sup _{z \in \mathbb{R}^d}\Big|\mathbb{P}\big(M_n^{f, x} /(\sqrt{n} {\sigma_*}) \le z\big)-\Phi_1(z)\Big| \lesssim n^{-\frac{q}{4 q+1}}.
\end{aligned}
$$
As a result, the second statement in \eqref{EE7} follows directly for the case $\mu(\varphi_f)>0$.

 Note from Chebyshev's inequality that for a random variable $\xi$ and any $0 \neq z \in \mathbb{R}$,
$$
(1 \wedge|z|)\big|\mathbb{P}(\xi \le z)-{\I}_{[0, \infty)}(z)\big|=(1 \wedge|z|)\big(\mathbb{P}(\xi>z) \I_{\{z>0\}}+\mathbb{P}(-\xi \ge-z) \I_{\{z<0\}}\big) \le \mathbb{E}|\xi| .
$$
Therefore, with regard to the setting $\sigma^2_*={\mu(\varphi_f)}=0$, for any integer $n \ge 1$, we have
$$
\sup _{z \in \mathbb{R}}\big|\mathbb{P}\big(\bar{M}_n^{f, x} \le z\big)-\Phi_0(z)\big| \le \frac{1}{\sqrt{n}}\big(\mathbb{E}\big|M_n^{f, x}\big|^2\big)^{\frac{1}{2}} .
$$
This, together with
$$
\mathbb{E}|M_n^{f, x}|^2=2 \sum_{i=1}^n \sum_{j=i}^n \mathbb{E}(Z_i^{f, x} Z_j^{f, x})=\sum_{i=1}^n \mathbb{E}|Z_i^{f, x}|^2=\sum_{i=1}^n \mathbb{E} \varphi_f(X_{i-1}^x)
$$
by using the fact that $\mathbb{E}(Z_j^{f, x}|\mathscr{F}_i)=0$ for $j>i$, and \eqref{ee} with $\sigma^2_*={\mu(\varphi_f)}=0$, implies
$$
\sup _{z \in \mathbb{R}}\big|\mathbb{P}\big(\bar{M}_n^{f, x} \le z\big)-\Phi_0(z)\big| \lesssim n^{-\frac{1}{4}}.
$$
Whence, the second statement concerned with the case $\sigma^2_*={\mu(\varphi_f)}=0$ in \eqref{EE7} is verifiable.
\end{proof}

\section{Proof of Theorem \ref{thm2}}\label{sec4}
By following respectively the procedures to implement the proof of Theorems \ref{thm1} and \ref{CLT}, for the proof of Theorem \ref{thm2},
the key ingredient is to demonstrate
the contractive property under the quasi-Wasserstein distance. More precisely, we shall prove the following statement.

\begin{proposition}\label{pro3}
Under Assumptions $({\bf A}_b)$ and $({\bf H}_\si)$, for any $p\ge2 $, $\theta\in(0,1]$  and $\mu,\nu\in\mathscr P_{\psi_{p,\theta}}(\R)$, there exist constants $C^*\ge1,\lambda^*>0$ such that
\begin{equation}\label{E18}
\mathbb W_{\psi_{p,\theta}}(\mu P_t,\nu P_t)\le C^*\e^{-\lambda^* t}\mathbb W_{\psi_{p,\theta}}(\mu  ,\nu  ),\qquad t\ge0,
\end{equation}
where $\mu P_t$ denotes  the law of $X_t$ solving \eqref{W*1}  with the initial distribution $\mathscr L_{X_0}=\mu$. \eqref{E18} further implies that $(X_t)_{t\ge0}$ solving \eqref{W*1} has a unique invariant probability measure in $\mathscr P_{\psi_{p,\theta}}(\R)$.
\end{proposition}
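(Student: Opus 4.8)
The plan is to reduce the SDE \eqref{W*1} with piecewise continuous drift to an SDE with \emph{continuous} coefficients by means of the one-dimensional diffeomorphism \eqref{HH}, and then to transcribe the reflection-coupling argument already implemented in the proof of Proposition \ref{pro2}. Concretely, first I would set $Y_t:=\Phi(X_t)$, where $\Phi$ is the $C^1$-diffeomorphism of $\R$ supplied by \eqref{HH} (in the spirit of \cite{MSY,LSb}): it equals the identity outside a fixed compact set, is bi-Lipschitz with $\Phi'$ bounded away from $0$ and $\8$, and is built so that the kinks of $\Phi'$ at $\xi_1,\dots,\xi_k$ cancel exactly the jumps of $x\mapsto\Phi'(x)b(x)$. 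It\^o's formula then shows that $(Y_t)_{t\ge0}$ solves
\begin{equation*}
\d Y_t=\hat b(Y_t)\,\d t+\hat\si(Y_t)\,\d W_t,\qquad \hat b(y)=\Big(\Phi' b+\frac12\Phi''\si^2\Big)(\Phi^{-1}(y)),\quad \hat\si(y)=(\Phi'\si)(\Phi^{-1}(y)).
\end{equation*}
Since $\psi_{p,\theta}(\Phi(x),\Phi(y))\asymp\psi_{p,\theta}(x,y)$ (using $|\Phi(x)|=|x|+O(1)$ together with the bi-Lipschitz property of $\Phi$), the quasi-Wasserstein distances for $(X_t)$ and for $(Y_t)$ are comparable up to multiplicative constants, so \eqref{E18} for $(X_t)$ follows from the analogous estimate for $(Y_t)$, and the invariant measure of $(X_t)$ is the pushforward under $\Phi^{-1}$ of that of $(Y_t)$.

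Next I would verify the structural facts on the transformed equation that put it inside the framework already treated for \eqref{E1} under $({\bf H}_b')$ and $({\bf H}_\si)$. Away from the $\xi_i$'s, \eqref{W*5} makes $\hat b$ locally Lipschitz on bounded sets; the design of $\Phi$ makes $\hat b$ continuous across each $\xi_i$, hence locally Lipschitz on all of $\R$; moreover $\hat b$ is globally one-sided Lipschitz, which one checks by combining the uniform local monotonicity in $({\bf A}_b)$, the continuity of $\hat b$ on the compact interval joining $\xi_1$ and $\xi_k$, and the dissipativity \eqref{W*} for widely separated arguments. From \eqref{E*}, \eqref{E4} and the boundedness of $\Phi'$ and of $1/\Phi'$, $\hat\si$ is Lipschitz and uniformly elliptic; in one dimension the reflection splitting is simply $\si^2=\tilde\si^2+\frac1{2\kk}$ with $\tilde\si:=\ss{\si^2-\frac1{2\kk}}$, which is again Lipschitz, so the H\"older correction appearing in the proof of Proposition \ref{pro2} is absent here. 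Finally, and most importantly, I would show that $V_p(y):=1+|y|^p$ is a Lyapunov function for the generator $\mathscr L$ of $(Y_t)$, i.e.\ $\mathscr L V_p\le-C_1(p)V_p+C_2(p)$: on $\{|y|\ge R\}$ one has $\Phi=\mathrm{id}$, hence $\hat b=b$, and \eqref{W*} directly yields $y\,b(y)\le C_\star-\lambda^\star y^2$; on the compact region $\{|y|\le R\}$ where $\Phi$ genuinely distorts the drift, the extra summand $\vv^\star|b(x)|(1+|x|)$ in \eqref{W*} is precisely what dominates the contributions of $\Phi'(x)b(x)$ and $\frac12\Phi''(x)\si(x)^2$ while keeping the coefficient of $-y^2$ positive, and this is where $\vv^\star\le1/2$ enters.

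With these ingredients in hand, I would run the reflection coupling of $(Y_t)$ exactly as in the proofs of Propositions \ref{pro1} and \ref{pro2}: attach to the level set of $V_p$ the threshold $l_p^*$, build the test function $f(r)=c^*(r\wedge l_p^*)^\theta+h(r\wedge l_p^*)$ with $h$ solving the corresponding linear second-order ODE, put $\Psi(t)=f(|Z_t|)\big(1+\vv V_p(Y_t)+\vv V_p(\hat Y_t)\big)$ for $Z_t=Y_t-\hat Y_t$ and a small $\vv>0$ fixed through Young's inequality, and use the It\^o--Tanaka formula to obtain $\d\Psi(t)\le-\lambda^*\Psi(t)\,\d t+\d M_t$ for $t<\tau$, where $\tau$ is the coupling time. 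Then $\E\big(\e^{\lambda^*(t\wedge\tau)}\Psi(t\wedge\tau)\big)\le\Psi(0)$ and $Z_t\equiv0$ for $t\ge\tau$ yield \eqref{E18} for $(Y_t)$, hence for $(X_t)$. Existence and uniqueness of an invariant probability measure in $\mathscr P_{\psi_{p,\theta}}(\R)$ then follow from \eqref{E18} and the Lyapunov bound along the lines of \cite[Corollary 4.11]{HMS}.

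The step I expect to be the main obstacle is the second one: organizing the diffeomorphism transformation so that $\hat b$ is \emph{simultaneously} continuous, locally Lipschitz and globally one-sided Lipschitz, and then extracting the polynomial Lyapunov function for $\mathscr L$ from the somewhat unusual condition \eqref{W*}. Once the transformed SDE is known to satisfy those hypotheses, the coupling computation is a transcription of the proof of Proposition \ref{pro2}, with the simplification that no H\"older term has to be tracked.
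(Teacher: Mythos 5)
Your proposal is correct and follows essentially the same route as the paper: transform via the diffeomorphism $G=\mathrm{id}+U$ of \eqref{HH}, transfer the quasi-Wasserstein distance through the bi-Lipschitz bounds \eqref{E00}--\eqref{E16}, verify that the transformed drift is one-sided Lipschitz and satisfies a polynomial Lyapunov bound (using \eqref{W*} and $\vv^\star\le 1/2$ exactly as you indicate) while $\tilde\si$ is Lipschitz and uniformly elliptic, and then obtain the contraction for the transformed SDE by the reflection-coupling argument, concluding the invariant measure statement by pushforward and \cite[Corollary 4.11]{HMS}. The only cosmetic difference is that the paper does not re-run the coupling but directly invokes Proposition \ref{pro2} with $b_0=0$ once conditions $({\bf H}_{\tilde b})$ and $({\bf H}_{\tilde\si})$ are checked.
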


Compared with the setups treated in Subsections \ref{subsection1} and \ref{subsection2},  the   outstanding feature of the framework in Subsection \ref{subsection3} is due to the discontinuity of drifts associated with SDEs under investigation. Thus, the approaches in tackling Propositions \ref{pro1} and \ref{pro2} cannot be applied directly. In view of this,
to handle the difficulty arising from the discontinuity of the drift term $b$, we  adopt the following transformation (see, for instance,  \cite{MY,MSY})
\begin{equation}\label{W*3}
U(x):=\sum_{i=1}^k \alpha_i(x-\xi_i)|x-\xi_i| \phi(({x-\xi_i})/{\dd}),\qquad x\in\R,
\end{equation}
where
\begin{equation*}
\alpha_i:=\frac{b(\xi_i-)-b(\xi_i+)}{2\si(\xi_i)^2},\qquad \phi(x):=(1-x^2)^4\I_{[-1,1]}(x),
\end{equation*}
and
\begin{equation}\label{W*2}
\dd:=%\frac{16}{3}\wedge
\begin{cases}
1\wedge\frac{\vv^\star}{32|\alpha_1|},\qquad \qquad \qquad \qquad \qquad \qquad\qquad \quad \quad\quad \quad\qquad\qquad  \,\,k=1,\\
1\wedge\frac{\vv^\star}{32\max\{|\alpha_1|,\cdots,|\alpha_k|\}}\wedge\Big( \frac{\vv^\star}{2}\min\{\xi_2-\xi_1,\cdots,\xi_k-\xi_{k-1}\}\Big),\quad k\ge2,
\end{cases}
\end{equation}
where the quantity $\vv^\star>0$ was introduced in \eqref{W*}.

The transformation $U$ given in \eqref{W*3} enjoys nice properties. In particular, $U$ and its derivative can be sufficiently small by
choosing appropriate parameter $\delta$ involved in the definition of $U.$ In terms of the definition of $\delta$ given in \eqref{W*2} and the prerequisite $\vv^\star\in(0,1/2]$, we have
\begin{equation}\label{EEE}
|U(x)|\le \dd^2 \sum_{i=1}^k|\alpha_i| {1}_{[\xi_i-\dd, \xi_i+\dd]}(x)\le \dd \max \{|\alpha_1|, \cdots,|\alpha_k|\}  \le \frac{\vv^\star}{1+\vv^\star}\le \frac{1}{3},\quad x\in\R.
\end{equation}
Moreover,
a direct calculation shows that the function  $U $   is differentiable such that
\begin{equation*}
U^{\prime}(x)=2 \sum_{i=1}^k \alpha_i|x-\xi_i|\big(1-((x-\xi_i) / \dd)^2\big)^3\big(1-5((x-\xi_i) / \dd)^2\big)^2 1_{[\xi_i-\dd, ~\xi_i+\dd]}(x),\quad x\in\R.
\end{equation*}
This obviously implies that
\begin{equation}\label{3E}
|U^{\prime}(x)| \le 32 \dd \sum_{i=1}^k|\alpha_i| {1}_{[\xi_i-\dd, \xi_i+\dd]}(x) \le 32 \dd \max \{|\alpha_1|, \cdots,|\alpha_k|\} \le  \vv^\star\le\frac{1}{2},
\end{equation}
by taking advantage of the alternative of $\delta$ and $\vv^\star\in(0,1/2].$ Furthermore, we define the transformation
\begin{align}\label{HH}
G(x):=x+U(x),\qquad x\in\R.
\end{align}
Apparently, \eqref{3E} yields that
\begin{equation}\label{W*9}
\frac{1}{2}\le 1-|U'(x)|\le  G'(x) \le 1+|U'(x)|\le \frac{3}{2}.
\end{equation}
%by taking the alternative of $\vv$ given in \eqref{W*2} into consideration. Meanwhile, we get
%
Consequently, we conclude that the transformation $x\mapsto G(x)$
is a diffeomorphism.

Note that  $U'$ is differentiable on each interval $I_{i}, i\in\mathbb S_k $  so $U'$ is piecewise differentiable.
Then,  for $Y_t:=G(X_t)$,  applying It\^o's formula yields
\begin{equation}\label{W*7}
\d Y_t=\tilde{b}(Y_t)\,\d t+\tilde{\sigma}(Y_t)\,\d W_t,
\end{equation}
where
\begin{equation*}
\tilde{b}(x):=(G'b)(G^{-1}(x))+\frac{1}{2}(G''\sigma)(G^{-1}(x)),\quad \tilde{\sigma}(x): =( G'\si)(G^{-1}(x)),\quad x\in\R.
\end{equation*}
According to \cite[Lemma 2]{MSY}, the SDE \eqref{W*7} is  strongly well-posed via extending $U'': \cup_{i=1}^{k+1}I_i\to\R$ to $U:\R\to\R$ by in particular taking
 \begin{equation*}
U''(\xi_i)=2\Big(\aa_i+\frac{b(\xi_i+)-b(\xi_i-)}{\si(\xi_i)^2}\Big),\quad i\in\mathbb S_k.
\end{equation*}

With the preceding preliminaries, we start to complete the
\begin{proof}[Proof of Proposition \ref{pro3}]
 Below, let $(X_t^\mu)_{t\ge0}$ and $(Y_t^\mu)_{t\ge0}$ be the solutions to \eqref{W*1} and \eqref{W*7} with $\mathscr L_{X_0}=\mu\in\mathscr P(\R)$ and $\mathscr L_{Y_0}=\mu\in\mathscr P(\R),$ respectively.
Due to  the Kontorovich dual, besides $Y_t^{\mu\circ G^{-1}}=G(X_t^\mu)$ and $Y_t^{\nu\circ G^{-1}}=G(X_t^\nu)$ for $\mu,\nu\in\mathscr P_{\psi_{p,\theta}}(\R)$, we find that
\begin{equation}\label{E20}
\begin{split}
\mathbb W_{\psi_{p,\theta}}(\mu P_t,\nu P_t)
%&=\sup_{\|f\|_{\psi_{p,\theta}}\le 1}\big|\E f(X_t^\mu)-\E f(X_t^\nu)\big|\\
=\sup_{\|f\|_{\psi_{p,\theta}}\le 1}\big|\E (f\circ G^{-1})(Y_t^{\mu\circ G^{-1}})-\E (f\circ G^{-1})(Y_t^{\nu\circ G^{-1}})\big|.
\end{split}
\end{equation}
Next, by means of the mean value theorem and \eqref{W*9}, it follows that
\begin{equation}\label{E00}
\big|G^{-1}(x)-G^{-1}(y)\big|\le 2|x-y|,\qquad |G(x)-G(y)|\le \frac{3}{2}|x-y|,\quad x,y\in\R.
\end{equation}
This further implies that for any    $x,y\in\R$,
\begin{equation}\label{E16}
|(f\circ G^{-1})(x)-(f\circ G^{-1})(y)|+|(f\circ G )(x)-(f\circ G )(y)|\le C_p^* \|f\|_{\psi_{p,\theta}}\psi_{p,\theta}(x,y),
\end{equation}
where $$C_p^*:=\big(2^\theta\big(2^{2p-1}\vee\big(1+2^p|G^{-1}(0)|^p)\big) \big) \vee \big((3/2)^\theta\big((3^p/2)\vee   (1+2^p|G^{(0)}|^p)\big)\big).$$

 Subsequently,  via the Kontorovich dual   once more, along with \eqref{E20} and \eqref{E16},
 we deduce that
\begin{equation}\label{E21}
\mathbb W_{\psi_{p,\theta}}(\mu P_t,\nu P_t)
\le C_p^*\mathbb W_{\psi_{p,\theta}}\big((\mu\circ G^{-1}) \tilde{P}_t,(\nu\circ G^{-1}) \tilde{P}_t\big),\quad \mu,\nu\in\mathscr P_{\psi_{p,\theta}}(\R)
\end{equation}
where $(\mu\circ G^{-1}) \tilde{P}_t:=\mathscr L_{Y_t^{\mu\circ G^{-1}}}$. Provided that there exist constants $C^\star,\lambda^\star>0$ such that
\begin{equation}\label{E19}
\mathbb W_{\psi_{p,\theta}}\big( \mu  \tilde{P}_t, \nu \tilde{P}_t\big)\le C^\star\e^{-\lambda^\star t}\mathbb W_{\psi_{p,\theta}}\big( \mu  ,\nu \big), \quad \mu,\nu\in\mathscr P_{\psi_{p,\theta}}(\R)
\end{equation}
then we derive from \eqref{E21} that
\begin{equation*}
\begin{split}
\mathbb W_{\psi_{p,\theta}}(\mu P_t,\nu P_t) \le C_p^*C^\star\e^{-\lambda^\star t}\mathbb W_{\psi_{p,\theta}}\big( \mu\circ G^{-1},\nu\circ G^{-1}\big) \le \big(C_p^*\big)^2C^\star\e^{-\lambda^\star t}\mathbb W_{\psi_{p,\theta}}\big( \mu ,\nu \big),
\end{split}
\end{equation*}
where the second inequality is valid due to
 the Kontorovich dual   again and \eqref{E16}.

 Based on the analysis above, to achieve \eqref{E18}, it remains to claim that
  \eqref{E19} is verifiable.
By applying  Proposition \ref{pro2} with $b_0=0$, for the validity  of  \eqref{E19}, it amounts to proving that
\begin{enumerate}
\item[$({\bf H}_{\tilde{\sigma}})$] there exist    constants $K^*>0$  and $\kk^*\ge1$ such that for all $x,y\in\R,$
\begin{align*}
\frac{1}{\kk^*}\le\tilde{\sigma}(x)\le \kk^*,\qquad |\tilde{\sigma}(x)-\tilde{\sigma}(x)|\le K^*|x-y|;
\end{align*}

\item[$({\bf H}_{\tilde{b}})$]
there exist constants $\lambda_0,\lambda^*_0,C_{\lambda^*_0}>0$ such that for all $x\in\R,$
\begin{equation*}
2(x-y)(\tilde{b}(x)- \tilde{b}(y))\le \lambda_0(x-y)^2,\qquad x\,\tilde{b}(x)\le -\lambda^*_0x^2+C_{\lambda^*_0}.
\end{equation*}
\end{enumerate}

By recalling the definition of $\tilde{\sigma}$,
 we obtain from (${\bf H}_\si$) with $d=1$ and \eqref{W*9} that for any $x,y\in\R,$
\begin{equation*}
\begin{split}
\frac{1}{4\kk}\le \tilde{\sigma}(x)^2\le 4 \kk,\qquad
|\tilde{\sigma}(x)-\tilde{\sigma}(y)|
%&\le |(\sigma\circ G^{-1})(x)|\cdot|(G'\circ G^{-1})(x)-(G'\circ G^{-1})(y)|\\
%&\quad+|(G'\circ G^{-1})(y)|\cdot|(\si\circ G^{-1})(x)-(\si\circ G^{-1})(y)|\\
\le \ss\kk|(G'\circ G^{-1})(x)-(G'\circ G^{-1})(y)|+ 3 \ss{K_2}|x-y|.
\end{split}
 \end{equation*}
Notice that
\begin{equation*}
G''(x)=-2\alpha_i \psi_i(x),\quad  x\in(\xi_i-\vv,\xi_i);\quad G''(x)=2\alpha_i \psi_i(x),\quad x\in(\xi_i ,\xi_i+\vv),
\end{equation*}
and that, otherwise, $G''(x)=0$.
Thus, a straightforward calculation, besides the continuity of $G':\R\to\R$,  reveals  that there exists a constant $c_0>0$ such that
\begin{equation}\label{EE2}
|G'(x)-G'(y)|\le  c_0|x-y|,\quad x,y\in\R
\end{equation}
so by invoking \eqref{E00} there is a constant $c_1>0$ satisfying that
\begin{equation*}\label{E22}
|\tilde{\sigma}(x)-\tilde{\sigma}(y)|\le c_1|x-y|,\qquad x,y\in\R.
 \end{equation*}
 Therefore, we conclude that  the assertion $({\bf H}_{\tilde{\sigma}})$ follows.

By following the exact line to derive \cite[(A2')(i)]{MSY}, there exists a constant $c_2>0$ such that
\begin{equation*}
2(x-y)(\tilde{b}(x)-\tilde{b}(y))\le c_2|x-y|^2,\quad x,y\in\R.
\end{equation*}

Next,
by taking the definition of  $\tilde{b}$ into consideration, we find readily from \eqref{3E} and \eqref{EEE} that for some constant $c_3>0,$
\begin{align*}
x\tilde{b}(x)%=x\left(G^{\prime} b\right)\left(G^{-1}(x)\right)+\frac{1}{2}x\left(G^{\prime \prime} \sigma\right)\left(G^{-1}(x)\right)\\
&=\big( G^{-1}(x)+U(  G^{-1}(x))\big) \big( b( G^{-1}(x))+(U^{\prime}b)(G^{-1}(x))\big)+\frac{1}{2}x(G^{\prime \prime} \sigma)(G^{-1}(x))\\
&\le  G^{-1}(x)b( G^{-1}(x))+ |G^{-1}|(x)|U^{\prime}b|(G^{-1}(x))\\
&\quad+ |U|(  G^{-1}(x))   \big( |b|( G^{-1}(x))+|U^{\prime}b|(G^{-1}(x))\big)+\frac{1}{2}|x|\cdot|G^{\prime \prime} \sigma|(G^{-1}(x))\\
&\le G^{-1}(x)b( G^{-1}(x))+\vv^\star\big(1+|G^{-1}(x)|\big)| b|(G^{-1}(x))+c_3|x|,\qquad x\in\R,
\end{align*}
 where in the identity we used  $G(x)=x+U(x)$ and in the last inequality we employed \eqref{E4} and \eqref{EE2}. Whereafter,  \eqref{W*} yields that
\begin{align*}
x\tilde{b}(x)
&\le C_\star-\lambda^*(G^{-1}(x))^2+c_3|x|,\qquad x\in\R.
\end{align*}
This, together with the fact that
\begin{equation*}
|(G^{-1}(x)|^2= |x-U((G^{-1}(x))|^2\ge \frac{1}{2}|x^2|-|U((G^{-1}(x))|^2\ge\frac{1}{2}|x^2|-\frac{1}{9},\quad x\in\R,
\end{equation*}
by making use of  the basic inequality: $(a-b)^2\ge\frac{1}{2}a^2-b^2$ for $a,b\in\R$ and \eqref{EEE}, leads to
\begin{align*}
x\tilde{b}(x)
&\le c_4-c_5|x|^2,\quad x\in\R
\end{align*}
for some constants $c_4,c_5>0.$ As a consequence,   we reach the assertion $({\bf H}_{\tilde{b}})$.

Based on the contractivity \eqref{E19}, the transformed SDE \eqref{W*7} has a unique invariant probability measure $\nu\in\mathscr P_{\psi_{p,\theta}}(\R)$ by following the line of \cite[Corollary 4.11]{HMS}. Note that the transformation $G$ constructed above is a diffeomorphism.
Thus, via   integrals with respect to image measures, we conclude that 
 $\mu:=\nu\circ G\in\mathscr P_{\psi_{p,\theta}}(\R)$ is the unique invariant probability measure of $(X_t)_{t\ge0}$ solving \eqref{W*1}.
\end{proof}

Finally, with the aid of Proposition \ref{pro3}, we complete
 \begin{proof}[Proof of Theorem \ref{thm2}] Applying Proposition \ref{pro3} yields  that for any $f\in C_{p,\theta}(\R)$.
 \begin{equation}\label{ET}
\big|(P_tf)(x)-\mu(f)\big|\le \|f\|_{p,\theta}\mathbb W_{\psi_{p,\theta}}(\delta_x P_t,\mu P_t)\le C^*\e^{-\lambda^* t}(|x|^p+\mu(|\cdot|^p)),\quad t\ge0,~  x\in\R,
\end{equation}
With this estimate at hand, the strong LLN can be   verifiable  by tracing the line in the proof of Theorem \ref{thm1} and, in particular,  replacing $|X_t|$ in \eqref{WW}, \eqref{EEE1} and \eqref{EEE2} by $|X_t|^p$, respectively. Moreover, with the help of \eqref{ET}, the CLT can de derived by following exactly the procedure to tackle Theorem \ref{CLT} so we omit the corresponding details herein.
\end{proof}
\section*{Acknowledgements}
We would like to thank the AE and referees for careful comments and correc$\&$tions. The research of Jianhai Bao is supported by the National Key R\&D Program of China
(2022YFA1006004) and NSF of China (No. 12071340).

\end{document}